\newcommand{\rem}[1]{}
\newcommand{\remfigure}[1]{}
\newcommand{\pa}{{\partial}}
\newtheorem{theorem}{Theorem}
\newtheorem{lemma}[theorem]{Lemma}
\newtheorem{proposition}[theorem]{Proposition}
\newtheorem{remark}[theorem]{Remark}
\numberwithin{theorem}{section}
\newenvironment{proof}[1][Proof]{\textbf{#1.} }{\ \rule{0.5em}{0.5em}}
\DeclareMathOperator*{\Ad}{Ad}
\DeclareMathOperator*{\ad}{ad}
\def\0{{\bf 0}}
\def\d{{\mathrm{d}}}
\def\R{\mathbb{R}}
\def\v{{\bf v}}
\def\w{{\bf w}}
\def\lieg{\mathfrak{g}}
\newif\iftodo
\newcommand{\todo}[1]{\vspace{5 mm}\par \noindent
\marginpar{\textrm{ToDo}} \framebox{\begin{minipage}[c]{0.45
\textwidth} \tt #1 \end{minipage}}\vspace{5 mm}\par}
\newcommand{\todo}[1]{}
\begin{document}

\title*{Normal forms for Lie symmetric cotangent  bundle systems with free and proper actions}
\titlerunning{Normal forms for rotationally-invariant systems} 
%for an abbreviated version of
% your contribution title if the original one is too long
\author{Tanya Schmah and Cristina Stoica}
% Use \authorrunning{Short Title} for an abbreviated version of
% your contribution title if the original one is too long
\institute{Tanya Schmah \at University of Toronto, \email{schmah@cs.toronto.edu}
\and Cristina Stoica \at Wilfrid Laurier University, \email{cstoica@wlu.ca}}
%
% Use the package "url.sty" to avoid
% problems with special characters
% used in your e-mail or web address
%
\maketitle

\iffalse
\abstract*{Each chapter should be preceded by an abstract (10--15 lines long) that summarises the content. The abstract will appear \textit{online} at \url{www.SpringerLink.com} and be available with unrestricted access. This allows unregistered users to read the abstract as a teaser for the complete chapter. As a general rule the abstracts will not appear in the printed version of your book unless it is the style of your particular book or that of the series to which your book belongs.
Please use the 'starred' version of the new Springer \texttt{abstract} command for typesetting the text of the online abstracts (cf. source file of this chapter template \texttt{abstract}) and include them with the source files of your manuscript. Use the plain \texttt{abstract} command if the abstract is also to appear in the printed version of the book.}
\fi

\abstract{
We consider free and proper cotangent-lifted symmetries of Hamiltonian systems.
For the special case of $G = SO(3)$, we construct symplectic slice coordinates
around an arbitrary point.
We thus obtain a parametrisation of the phase space suitable for  the study of dynamics near relative equilibria, in particular for the Birkhoff-Poincar\'e normal form method.
For a general symmetry group $G$, we observe that for the calculation of the  truncated normal forms, one does not need an
explicit coordinate transformation but only its higher derivatives at the relative equilibrium.
% the functional formula for the changes of coordinates, 
 We outline an iterative scheme using these derivatives for the computation of truncated Birkhoff-Poincar\'e normal  forms. 
%At its core, our method relies on the observation that for   the calculation of the  truncated normal forms one does not need an
%explicit coordinate transformation
% the functional formula for the changes of coordinates, 
%but only its derivatives at the equilibrium/relative equilibrium.
%
%The splitting of the phase space and the associated change of coordinates are explicitly given for 
%$SO(3)$-symmetric systems.
%systems on $T^*SO(3)$  and further specialised to 
%cotangent bundle systems. 
%The general algorithm, for any symmetry group $G$,
%This algorithm
%is based on an iterative scheme which allows the calculation of the truncated normal form up to any desired order. At its core, our method relies on the observation that for   the calculation of the  truncated normal forms one does not need an
%explicit coordinate transformation
% the functional formula for the changes of coordinates, 
%but only its derivatives at the equilibrium/relative equilibrium.
%We conclude by sketching  an application of the Nekhoroshev estimates theory. 
%
%
%
%
%
%In this paper, we investigate the free action of a Lie group $G$ on $T^*G$ by cotangent lifts. For the the special case of $G = SO(3)$, we construct an explicit symplectic tube around an arbitrary point. We use this to construct a symplectic tube for any free cotangent-lifted action of $SO(3)$ on an arbitrary manifold, 
%We discuss the implications of these results for the study of dynamics near relative equilibria, and in particular the applicability of Birkhoff normal forms in the reduced space.
}

\tableofcontents

\section{Introduction}

%The dynamics in the Marsden-Weinstein reduced space is realised locally as a hamiltonian on a linear symplectic vector space (local Darboux coordinates).

The Birkhoff-Poincar\'e normal  form  is one of the main tools used in studying local bifurcation and stability for dynamical systems. It is a method  based on applying coordinate transformations that simplify the jets of a vector field at an equilibrium, up to a certain order.  
For Hamiltonian vector fields, the transformations applied must be symplectic, or more generally Poisson, so that the truncated vector field preserves its structure.  %Normal forms for  canonical Hamiltonian systems is a well developed classical subject and an extensive body of literature is available; see, for instance,  ?cite [MH02] and references therein. 
We will not  report here on the importance and usefulness  of  normal forms in relation, for instance,  to bifurcation and stability theory; the interested reader may consult, for instance, \cite{Broer09} and references therein, as well as \cite{M92}. 
%In the context of cotangent bundles systems with $SO(3)$ symmetry, very recent applications can be found in  \cite{turkey_1} and \cite{turkey_2}; here, since the normal forms are calculated  directly on the reduced space, there is no need of a canonical  embedding in the full space.  

%\cite[MH02] Meyer, Hall and Offin

 For Lie symmetric  systems, relative equilibria play an important r\^ole  in dynamics, analogous 
 to the r\^ole that equilibria play for generic vector fields.
  A very common first step in dynamical studies near relative equilibria 
% A widely agreed  approach for studying dynamics near relative equilibria 
is to use a \textit{slice} theorem to pass to a coordinate system 
that separates directions along and transversal to the group orbit.
%
%
%Such systems of coordinates are
%provided by  \textit{slice} theorems. 
%
Indeed,  if the symmetry group $G$ acts freely and properly, then in a sufficiently small neighborhood   of an orbit $Gz_0$,
%with trivial isotropy subgroup,  
the phase space is isomorphic to 
the \textit{slice bundle}  $G \times S$ where $S$, called the \textit{slice},  is a subspace of the tangent space at $z_0$ that is transversal to $Gz_0$.
%For simplicity, we describe here the slice parametrisation in the free case. 
(For the non-free case, as well as a characterisation of normal forms 
near relative equilibria, see \cite{L07}.) %
This local model of the action of $G$ on the phase space is actually ``semi-global''  in the sense that it is
global ``in the $G$ direction'' but local ``in the transverse direction''. 

In these coordinates,  a relative equilibrium $z(t)$ corresponds to $\left(\text{exp}(t \omega), 0 \right) $ and the dynamics takes the form 
 \[\dot g= g f_G(s), \quad \quad \dot s=  f_S(s)\,,\]
where $f_S: S\to S$ and $f_G: S \to \mathfrak{g}.$ 
  Thus, locally, the dynamics in the slice $\dot s=  f_S(s)$ drives the dynamics in the group (or ``drift") directions $\dot g= g f_G(s)$. 

\smallskip
In the case of Lie symmetric Hamiltonian systems, the dynamics may be split into the ``drift'' and ''slice'' directions as above, but it must also accommodate the additional Hamiltonian structure. By Noether's theorem, the symmetry group $G$ provides the Hamiltonian system with conserved quantities, called momenta. The symplectic manifold is therefore partitioned into flow-invariant level sets of the momenta. The way these level sets intersect the slice can be complicated, especially when $G$ is non-abelian and the relative equilibrium has non-trivial isotropy (the \textit{non-free action} case). This leads to a nontrivial structure on the slice bundle, which in turn induces a nontrivial structure on the slice equations, \cite{RWL02, RSS06}. 

For calculating Birkhoff-Poincar\'e normal forms near relative equilibria of Hamiltonian systems, a natural approach   is to   try to transfer  the machinery from the case of canonical Hamiltonian systems near an equilibrium. 
For many dynamical studies, it is sufficient to consider a single symplectic reduced space at a
single momentum level $\mu_0$,
 in which the original relative equilibrium $z_0$ corresponds to an equilibrium.
 
In the case of cotangent-bundle systems $T^*Q$, coordinates on the symplectic reduced space suitable for the normal form computation may be found by
applying a slice theorem in the configuration space: $Q$ is locally modelled as $G \times S$,
where $S$ is an ``internal-shape space" direction transverse to the group orbit $Gq_0$.
Then the symplectic reduced space may be identified with $\mathcal{O}_{\mu_0} \times T^*S$ (where 
$\mathcal{O}_{\mu_0}$ is the coadjoint orbit through $\mu_0$), with the 
KKS and canonical symplectic forms.
%.  This may be %modeling the reduced space using a \textit{slice} theorem at the configuration space level.  This may be %understood as a coordinate change  into the "body" coordinates, with a split of the space into ``group"  $G%$ and  ``internal-shape space" $A$ direction. 
%Provided a canonical parametrisation of the co-adjoint orbit is found, one is able to calculate the truncated normal forms  (which for $SO(3)$ is given by the Deprit coordinates), this 
This point of view is pursued in \cite{Ciftci12} and \cite{Ciftci14}; see also \cite{MoRo99}.

However it is not always sufficient to consider only a single symplectic reduced space, or a single momentum level set. In particular, the analysis of symmetry-breaking perturbations requires 
symmetry-adapted local coordinates for the entire phase space that simultaneously
place the group action, its momentum map and the symplectic form in simple forms.
%This is particularly important for the study of symmetry-breaking perturbations, and phenomena such as %Nekhoroshev long term stability of perturbed integrable systems (e.g., the perturbed rigid body).  
%For this, one needs to embed  the (symplectically)  reduced space into the un-reduced space   in such a way that their canonical symplectic structure  agree. 
For symplectic actions, the   Hamiltonian slice theorem of Marle \cite{Mar85}
and Guillemin and Sternberg \cite{GS84} (see Theorem \ref{marle} below)
achieves this goal, modelling the phase space as $G\times \mathfrak{g}_{\mu_0}^{\ast}\times N_{s}$
(for free actions), where $N_s$ is the symplectic normal space.
%which can be identified with the tangent of the symplectic reduced space. 
In these symplectic slice coordinates, 
the momentum level set $J^{-1}(\mu_0)$
% of the original relative equilibrium
becomes $G_{\mu_0} \times \{0\} \times N_s$, 
where $G_{\mu_0}$ is the isotropy group of $\mu_0$ with respect to the co-adjoint action;
so the corresponding symplectic reduced space may be symplectically embedded in the unreduced space
as $\{e,0\} \times N_s$.
%the reduced space is symplectically embedded into the un-reduced space.
% in such a way that their canonical symplectic structure  agree. 
%the momentum level set of the original relative equilibrium
%becomes $G_\mu \times \{0\}$ \times N_s$, 
For free actions 
on cotangent bundles,
$N_s \cong T_{\mu_0} \mathcal{O}_{\mu_0} \times T^*S$,
for $S$ a slice in configuration space, with KKS and canonical symplectic forms,
which illustrates the connection with the approach in the previous paragraph.

The applicability of the Hamiltonian slice theorem is obstructed by the lack of a constructive proof. Practically,  one does not know the change of coordinates, known as the \textit{symplectic tube}, which renders the desired structure. This explicit change of coordinates has been found in only two special cases:
(i) cotangent-lifted actions where $G_{\mu_0} = G$ (this happens, for instance, at zero momentum and for abelian Lie groups) \cite{Sch07}; and (ii) free cotangent-lifted actions of $G=SO(3)$, which we present here in Sections \ref{ssect:sliceSO3} and \ref{ssect:arbitrary}.

\medskip In this paper we outline an algorithm for the computation of truncated Birkhoff-Poincar\'e normal  forms  near a relative  equilibrium, for Lie symmetric  cotangent bundle systems with free and proper actions. The splitting of the phase space and the associated change of coordinates are explicitly given for 
$SO(3)$-symmetric systems.
%systems on $T^*SO(3)$  and further specialised to 
%cotangent bundle systems. 
The general algorithm, for any symmetry group $G$,
is based on an iterative scheme which allows the calculation of the truncated normal form up to any desired order. At its core, our method relies on the observation that for   the calculation of the  truncated normal forms one does not need an
explicit coordinate transformation
% the functional formula for the changes of coordinates, 
but only its derivatives at the equilibrium/relative equilibrium. We thank Mark Roberts 
making this key observation in a discussion about 10 years ago.

  For the dynamics on $T^*SO(3)$, the coordinates we have obtained for the reduced space coincide with the \textit{regularised}  Serret-Andoyer-Deprit coordinates used in celestial mechanics (see \cite{Fasso_4} and references therein);  however, we retrieved these  coordinates  via a different path and this was crucial for arriving at a methodology for the general case. 
Our slice parameterisation uses a global description for the reconstruction (attitude) variable $R(t) \in G.$ (We use the word ``attitude'' in analogy to its use in rigid body dynamics.) In concrete applications, it is likely that a local coordinate system will be used. For example, for $SO(3),$ if an explicit local coordinate system
is sought, Serret-Andoyer-Deprit is probably the best choice,
because they are action-angle coordinates with a very simple relation to Euler angles.  
%The relationship between the two parametrisations will be discussed in detail in future work.

 We also compare the splitting of the phase space used for the computations of the normal forms with those used in the Reduced Energy Momentum Method (REM) \cite{SLM91, M92}, 
 the latter citation being to Jerry Marsden's ``blue book''. We respond to one of Jerry's questions stated on page 104 of that book:

\medskip
 \textit{It is also of interest to link the normal forms here} (i.e., in the REM) \textit{with those in singularity theory. In particular, can one use the forms here as first terms in higher order normal forms?}
 
 \medskip
 In short,  the answer is no: while the REM splittings are very useful when looking for sufficient conditions for  stability with minimal computational effort, 
 they do not organise the symplectic form in a convenient form for the  Birkhoff-Poincar\'e normal form method. We expand on this subject in Section \ref{sect:REM}.

\medskip
This paper is organised as follows. In Section 2 we investigate the free action of a Lie group $G$ on $T^*G$ by cotangent lifts,
arriving at a general \textit{Tube Condition} given in Proposition \ref{tubecond}.
(Some technical details from this section appear in the Appendix.) 
We then focus on the special case of $G = SO(3)$ and succeed in constructing
an explicit symplectic tube around an arbitrary point, see Theorem \ref{SO3tube}.
% remark about \mu = 0 case.
We use this to construct a symplectic tube
for any free cotangent-lifted action of $SO(3)$ on an arbitrary manifold, see Section \ref{ssect:arbitrary}.
%Proposition \ref{prop_gen_tube}. 
In Section 3 we offer the equations of motion in slice  coordinates  for dynamics on $T^*SO(3)$ and for cotangent bundle rotationally invariant systems, including the case of simple mechanical systems. 
In Section 4 we outline the  algorithm for calculating truncated Birkhoff-Poincar\'e normal forms for general free and proper actions. Section 5 comments on the relationship between the splittings used in these normal forms and those in the Reduced Energy Momentum method.

\section{Slice coordinates}

\subsection{Lie symmetries of Hamiltonian systems}

For general background information on Lie symmetries,
see \cite{HoSchSt09}.
In what follows,  gothic letters will always denote Lie algebras of the Lie groups with
corresponding latin letters.
Let $G$ act on $M$, with the action of $g\in G$ on $z\in M$ denoted by $gz$. 
The corresponding infinitesimal action of $\xi \in \mathfrak{g}$ on $z$ is denoted by $\xi z$.
The \emph{isotropy subgroup} of a point $z\in M$ is $G_{z}:=\left\{ g\in G\mid g z=z\right\} .$
The adjoint action of $G$ on $\mathfrak{g}$ is denoted by $\Ad$,
and the infinitesimal adjoint action by $\ad$.
The \emph{coadjoint} action of $G$ on $\mathfrak{g}^{\ast}$
is the inverse dual to the adjoint action, 
$g\nu=\Ad_{g^{-1}}^{\ast}\nu$.
The infinitesimal coadjoint action  is given by
$\xi\cdot \nu=-\ad_{\xi}^{\ast}\nu$. 
For any $\mu\in \mathfrak{g}^*$, the notation $G_\mu$ will always
denote the isotropy subgroup of $G$ with respect to the coadjoint action, that is
$G_\mu:=\{g \in G\,|\, \Ad_{g^{-1}}^* \mu=g\}.$
\iffalse
The following standard identities are easily verified:
\begin{align}\label{E:Adad1}
\Ad\nolimits_g \ad\nolimits_\xi 
&= \ad\nolimits_{\Ad\nolimits_g \xi} \Ad\nolimits_g \\ 
\Ad\nolimits_g^* \ad\nolimits_\xi^* 
&= \ad\nolimits_{\Ad\nolimits_{g^{-1}} \xi}^* \Ad\nolimits_g^* \notag
\end{align}
Note that the first identity can be written in alternative notation as
\begin{align}\label{E:Adad2}
\left[\Ad\nolimits_g\xi_1, \Ad\nolimits_g\xi_2\right]
=\Ad\nolimits_g\left[\xi_1,\xi_2\right],
\end{align}
since
$
\left[\Ad\nolimits_g\xi_1, \Ad\nolimits_g\xi_2\right]
=\ad\nolimits_{\Ad\nolimits_g\xi_1}\left(\Ad\nolimits_g\xi_2\right) = 
\Ad\nolimits_g\ad\nolimits_{\xi_1}\xi_2
=\Ad\nolimits_g\left[\xi_1,\xi_2\right].
$
\fi
The notation introduced is summarised in the following table.

\bigskip

\centerline{
\begin{tabular}{|r|l|}
\hline
$\mathfrak{g}$ & Lie algebra of a Lie group $G$ \\ \hline
$gz$ & action of $g\in G$ on $z$ \\ \hline
$\xi z$  & infinitesimal action of $\xi \in \mathfrak{g}$ on $z$ \\ \hline
$\Ad_{g}$ & adjoint action of $g\in G$ on $\mathfrak{g}$ \\ \hline
$ \Ad^*_{g^{-1}}$ & coadjoint action of $g\in G$ on $\mathfrak{g}^*$ \\ \hline
$\ad_{\xi}$ & infinitesimal adjoint action of $\xi \in \mathfrak{g}$ on $\mathfrak{g}$ \\ \hline
$ -\ad^*_{\xi}$ & infinitesimal coadjoint action of $\xi \in \mathfrak{g}$ on $\mathfrak{g}^*$ \\ \hline
%$J$ & momentum map \\ \hline
$G_\mu$ & isotropy subgroup of $\mu$ w.r.t. coadjoint action\\
\hline
\end{tabular}
}

\bigskip
%\noindent
%\textbf{Momentum Maps.}
Suppose $G$ acts symplectically on a symplectic manifold $\left(M,\Omega\right).$ 
Recall that
any function $F:M\rightarrow\mathbf{R}$ defines a Hamiltonian vector
field $X_{F}$ by $i_{X_{F}}\Omega=dF,$ in other words $\Omega\left(X_{F}\left(z\right),v\right)=dF\left(v\right)$
for every $v\in T_{z}^{\ast}M.$ A \emph{momentum map} is a function
$J:M\rightarrow\mathfrak{g}^{\ast}$ satisfying $X_{J_{\xi}}(z) = \xi z$
for every $\xi\in\mathfrak{g}$ and $z\in M$, where $J_{\xi}:M\rightarrow\mathbf{R}$
is defined by $J_{\xi}\left(z\right)=\left\langle J\left(z\right),\xi\right\rangle .$
If the
$G$ action has an $\Ad^{\ast}$-equivariant momentum
map $J,$ then it is called \emph{globally Hamiltonian.}
%Note that if $J(z)=\mu$ and $G_{\mu}$ is the isotropy group of $\mu$
%with respect to the coadjoint action and $J$ is $\Ad^*$-equivariant then $G_z\subset G_{\mu}.$

\medskip
The \textit{coadjoint orbit} through any $\mu \in \mathfrak{g}^*$ is the orbit of $\mu$
with respect to the coadjoint action, $\mathcal{O}_\mu := \{ \Ad_{g^{-1}}^* \mu : g \in G\}$.
The \textit{Kostant-Kirillov-Souriau (KKS)} symplectic forms on any coadjoint orbit
$\mathcal{O}_\mu$
%\subset \mathfrak{g}^*$ 
are
given by
\begin{align}
\label{E:KKS}
\Omega_{\mathcal{O}_\mu}^{\pm}\left(  \nu\right)  \left(-\ad^*\nolimits_{\eta_1} \nu, -\ad^*\nolimits_{\eta_2} \nu\right) =\pm\left\langle \nu,\left[  \eta_1,\eta_2\right]
\right\rangle \, .
\end{align}
The momentum map of the coadjoint action of $G$ on $\mathcal{O}_\mu$
with respect to $\Omega_{\mathcal{O}_\mu}^{\pm}$
(the ``$\pm$KKS forms'')
is
$J_{\mathcal{O}_\mu}\left(\nu\right) = \pm \nu.$
%Using \eqref{E:Adad2}, i
It can be shown that the KKS forms
%, for any $G$, 
are always $G$-invariant.
\iffalse
\begin{proof}
\begin{align*}
\Omega_{\mathcal{O}}^{\pm}(g\mu) \left( 
g\left(\xi_1\cdot \mu\right), g\left(\xi_2\cdot \mu\right)\right) 
&= 
\Omega_{\mathcal{O}}^{\pm}(g\mu) \left(
\left(\Ad\nolimits_g\xi_1\right)\cdot \left(g \mu\right), 
\left(\Ad\nolimits_g\xi_2\right)\cdot \left(g \mu\right)
\right) \\
&= \pm
\left\langle
g\mu,
\left[\Ad\nolimits_g\xi_1, \Ad\nolimits_g\xi_2\right]
\right\rangle \\
&= \pm
\left\langle
\Ad\nolimits_{g^{-1}}^*\mu,
\Ad\nolimits_g\left[\xi_1,\xi_2\right]
\right\rangle \\
&= \pm
\left\langle
\mu, \left[\xi_1, \xi_2\right]
\right\rangle \\
&=\Omega_{\mathcal{O}}^{\pm}(\mu) \left( 
\xi_1\cdot \mu,\xi_2\cdot \mu\right) .
\end{align*}
\end{proof}
\fi

Let $N_s$ be the \emph{symplectic normal space} at $z$,
%Since $\ker T_z\pi_\mu = \mathfrak{g}_\mu\cdot z$, it follows that
%$T_z \left(J^{-1}\left(  \mu\right)  /G_{\mu}\right)$ is isomorphic to
\[
N_s(z):= \ker dJ(z)/\mathfrak{g}_{\mu} z\, ,
\]
where $\mathfrak{g}_{\mu} z := \{\xi z : \xi \in \mathfrak{g}_{\mu}\}$
The restriction of $\Omega(z)$ to $\ker dJ(z)$
has kernel $\mathfrak{g}_\mu z$,
by the Reduction Lemma \cite{AM78}, 
so it descends to 
a reduced symplectic bilinear form on $N_s(z)$.
For free and proper actions, this space is isomorphic to the tangent at $[z]$ to the symplectic reduced space $J^{-1}(\mu) / G_\mu$
(see \cite{M92}).

We now give limited versions of Palais' slice theorem \cite{Pal61, OR04} and the Hamiltonian
Slice Theorem of Marle, Guillemin and Sternberg \cite{Mar85, GS84}, 
treating only the case of free actions (for ease of exposition).

\begin{theorem}\label{Palais}
[``Palais' Slice Theorem'' for free actions]\cite{Pal61, OR04} Let \(G\) be a Lie group acting properly, smoothly and freely on a manifold \(M,\) and let \(z\in M.\) Choose a local Riemannian metric around \(z\)
(such a metric always exists),  let \(N\) be the orthogonal complement to
\(\mathfrak{g} z\), and let \(\exp_{z}\) be the corresponding
Riemannian exponential based at \(z.\) Then there exists a
neighbourhood \(S\) of \(0\) in \(N\) such that the map
\begin{align*}
\tau:G\times (S \subset N)  &  \rightarrow M\\
\left(  g,s\right)   &  \longmapsto g \exp_{z}s
\end{align*}
is a $G$-equivariant diffeomorphism.
(Such a $\tau$ is called a \textit{tube}.)
% for the \(G\) action at \(z.\)
If $M$ is a vector space and $G$ acts linearly,
then the ``$exp_z s$'' in the formula for $\tau$ may be replaced by ``$z+s$'',
and $\tau$ is a $G$-equivariant diffeomorphism for any choice of an $H$-invariant neighbourhood $S$ of
$0$ for which $\tau$ is injective.
\end{theorem}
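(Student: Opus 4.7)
The plan is to verify the tube map $\tau(g,s) = g\exp_z s$ is (i) $G$-equivariant, (ii) a local diffeomorphism at $(e,0)$, and hence everywhere on $G \times \{0\}$, and (iii) injective on $G \times S$ for sufficiently small $S$; local diffeomorphism plus injectivity then gives the global conclusion.

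Equivariance is immediate from the formula. For the local diffeomorphism step, I would compute the derivative at $(e,0)$: writing $T_{(e,0)}(G\times N) \cong \mathfrak{g}\oplus N$, one gets $d\tau_{(e,0)}(\xi,v) = \xi z + v$ using $d(\exp_z)_0 = \mathrm{id}$. Freeness of the action implies $\xi\mapsto \xi z$ is injective on $\mathfrak{g}$, so its image is exactly $\mathfrak{g} z$; combined with the orthogonal decomposition $T_z M = \mathfrak{g} z \oplus N$, this shows $d\tau_{(e,0)}$ is a linear isomorphism. The inverse function theorem then yields a neighborhood $U_1\times S_1$ of $(e,0)$ on which $\tau$ restricts to a diffeomorphism; translating by $G$-equivariance gives the same at every $(g,0)$.

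The heart of the proof, and the main obstacle, is promoting local injectivity to global injectivity on some $G\times S$ — this is where properness enters. Suppose no such $S$ existed; then one could find sequences $s_1^n,s_2^n \to 0$ in $N$ and $h_n\in G$ with $(h_n,s_1^n)\neq (e,s_2^n)$ but $h_n \exp_z s_1^n = \exp_z s_2^n$. Since the points $\exp_z s_i^n$ lie in a relatively compact neighborhood of $z$, properness forces $\{h_n\}$ to lie in a compact subset of $G$, so we may pass to a convergent subsequence $h_n\to h$. Continuity gives $hz = z$, and freeness forces $h=e$. For $n$ large, $(h_n,s_1^n)$ and $(e,s_2^n)$ both lie in the neighborhood $U_1\times S_1$ of $(e,0)$ on which $\tau$ is injective, yielding $h_n = e$ and $s_1^n=s_2^n$ — a contradiction. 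Choosing $S \subset S_1$ as a small open ball produces a $G$-invariant open set $G\times S$ on which $\tau$ is both a local diffeomorphism and injective, hence a diffeomorphism onto the open set $\tau(G\times S)\subset M$.

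For the linear addendum, if $M$ is a vector space with a linear $G$-action, then equipping $M$ with a flat (constant) inner product makes the Riemannian exponential $\exp_z s = z+s$, which justifies the substitution in the formula. The derivative calculation and equivariance argument go through verbatim, so $\tau$ is always a local diffeomorphism on all of $G \times N$. Consequently, one no longer needs properness: if $S$ is any $H$-invariant neighborhood of $0$ on which $\tau$ is assumed injective, then the combination of everywhere-local-diffeomorphism and global injectivity immediately yields that $\tau\colon G\times S \to \tau(G\times S)$ is a diffeomorphism, as claimed.
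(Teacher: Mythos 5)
The paper does not actually prove this theorem: it is quoted from \cite{Pal61, OR04} as background, so there is no internal proof to compare against. Judged on its own terms, your argument for the main statement is the standard one and is correct: equivariance is formal; freeness gives injectivity of $\xi \mapsto \xi z$ (the isotropy algebra $\mathfrak{g}_z$ vanishes because $G_z$ is trivial), so $d\tau_{(e,0)}(\xi,v)=\xi z+v$ is an isomorphism onto $\mathfrak{g}z\oplus N=T_zM$; equivariance propagates invertibility of $d\tau$ over $G\times S_1$; and the properness argument (compactness of $\{h: hK\cap K\neq\emptyset\}$, extraction of $h_n\to h$ with $hz=z$, hence $h=e$, contradicting local injectivity) correctly upgrades local to global injectivity. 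This is exactly the textbook route.

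There is, however, a genuine gap in your last paragraph. You assert that in the linear case ``the derivative calculation goes through verbatim, so $\tau$ is always a local diffeomorphism on all of $G\times N$.'' It does not: at $(e,s)$ one has $d\tau_{(e,s)}(\xi,v)=\xi(z+s)+v$, and invertibility requires $T_{z+s}M=\mathfrak{g}(z+s)\oplus N$, a splitting that was arranged only at $s=0$. For $s$ away from the origin the orbit direction $\mathfrak{g}(z+s)$ can drop in dimension (the point $z+s$ may acquire a nontrivial isotropy algebra --- e.g.\ collinear configurations for the diagonal $SO(3)$ action on $(\R^3)^N$) or can fail to be transverse to $N$, and then $\tau$ is injective but not an immersion, hence not a diffeomorphism. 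The conclusion ``injective $+$ local diffeomorphism everywhere $\Rightarrow$ diffeomorphism onto image'' is fine, but its first hypothesis in this unrestricted form is unproved and false in general; the set where $d\tau$ is invertible is merely an open $G$-invariant neighbourhood of $G\times\{0\}$, so the addendum holds only for $S$ contained in that set (which is how the cited statement should be read, and how the paper in fact uses it).
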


\medskip
%\noindent \textbf{The Hamiltonian Slice Theorem (for free actions).}
Suppose that $G$ acts 
%\emph{freely} and 
\emph{symplectically} on a manifold $\left(M,\Omega\right)$,
with $\Ad^{\ast}$\emph{-}equivariant
momentum map $J$.
We would like to find a \textit{symplectic} tube $\tau: G \times N \to M$, for some $N$,
%in the previous theorem to be symplectic, 
with respect to some simple or ``natural'' symplectic form on $G\times N$.
The Hamiltonian Slice Theorem, 
also known as the
Marle-Guillemin-Sternberg normal form \cite{GS84, BL97}, accomplishes this, 
for actions that are not necessarily free.
We present the theorem now only for free actions.
Let $z\in M$ and 
$\mu=J\left(z\right)$, and let $G_{\mu}$ be the isotropy group of $\mu$ with respect to the coadjoint action.
(Note that $\mu$ is a specific momentum value, corresponding to the $\mu_0$ in the Introduction;
we have dropped the subscript $0$ for ease of notation.)
Let $N_s$ be the \emph{symplectic normal space} at $z$.
%The space $G\times\left(\mathfrak{g}_{\mu}^{\ast}\times N_{s}\right)$
%can be considered to be a special case of the model space $G\times N$
%in Theorem \ref{Palais}. 
We define a symplectic form on 
$G\times \left(\mathfrak{g}_\mu^{\ast}\times N_{s}\right)$.
First, choose a specific $G$-invariant splitting $\lieg = \lieg_\mu \oplus \lieg_\mu^\perp$. 
Define $\Omega_{T}$ and $\Omega_\mu$ on $G\times\mathfrak{g}_{\mu}^{\ast}$ by 
\begin{align*}
\Omega_T(g,\nu)\left((\xi_1, \dot \nu_1), (\xi_2, \dot \nu_2)\right) 
&=  <\mu, [\xi_1,\xi_2]>  \\
\Omega_0 (g,\nu)\left((\xi_1, \dot \nu_1), (\xi_2, \dot \nu_2)\right) 
&=  \left<\nu, \left[\xi_1,\xi_2\right]\right\rangle +
 <\dot \nu_2, \xi_1^\mu> - <\dot \nu_1, \xi_2^\mu>,
\end{align*}
where $\xi_1^\mu$ and $\xi_2^\mu$ are the $\lieg_\mu$ components of $\xi_1$ and $\xi_2$.
%Note that $\Omega_{\mu}$ is the pull-back 
%of the KKS symplectic form $\omega_{\mathcal{O}_{\mu}}^{+}$
%by the map 
%$\, G\times\mathfrak{g}_{\mu}^{\ast}  \to\mathcal{O}_{\mu},\
%\left(g,\nu\right)  \mapsto Ad_{g^{-1}}^{\ast}\mu\, .
%$
Third, let $\Omega_{N_{s}}$ be the reduced symplectic bilinear form
on $N_{s}$ (defined above).
Then $\Omega_{Z}$ $:=\Omega_{T}+\Omega_{0}+\Omega_{N_{s}}$
is a presymplectic form on $G\times\mathfrak{g}_{\mu}^{\ast}\times N_{s}$. 
It can be shown that there exists
a $G$-invariant neighbourhood $Y$ of $\left[e,0,0\right]$ in
$G\times \mathfrak{g}_\mu^{\ast}\times N_{s}$
in which $\Omega_{Z}$ is symplectic. 
Let $\Omega_Y$ be the restriction of $\Omega_Z$ to $Y$.
Finally, note that there is
left $G$-action on $Y$ given by $
g^{\prime}\left(g,\nu,\rho\right)=\left(g^{\prime}g,\nu,\rho\right)$
It is easy to check that this is symplectic %
with respect to $\Omega_{Y}.$
 
\begin{theorem}\label{marle}
[Hamiltonian Slice Theorem for free actions]\footnote{
In the full Hamiltonian Slice Theorem, at a point $z$ with non-trivial isotropy group $G_z$, the model space is $G\times_{G_z} \left(  \mathfrak{g}_\mu^{\ast}\times N_{s}\right) $, and $J_Y$ has
an extra term.}\cite{GS84, BL97}
In
the above context, there exists a symplectic tube from
%\(G\)-invariant neighbourhood of \(\left[  e,0,0\right]  _{H}\) in the manifold
\(Y\subset G\times \mathfrak{g}_\mu^{\ast}\times N_{s}  \) to $M$
that maps 
%\(G\)-invariant neighbourhood of \(z\) in \(M,\) such that 
\(\left(  e,0,0\right)\) to \(z.\)
The momentum map 
of the $G$ action on $Y$
is 
\[
J_Y\left(g,\nu,\rho\right) = {\Ad}^*_{g^{-1}}\left( \mu + \nu \right).
\]
\end{theorem}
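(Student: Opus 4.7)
The plan is to combine Palais' slice theorem with an equivariant Moser (relative Darboux) deformation, and then verify the momentum-map formula using $\Ad^\ast$-equivariance of $J$. First I would apply Theorem \ref{Palais}, via a $G$-invariant Riemannian metric around $Gz$, to obtain a smooth $G$-equivariant tube $\tau_0\colon G\times N\to M$, where $N\subset T_zM$ is the orthogonal complement of $\mathfrak{g}z$. Because the action is free, no isotropy bookkeeping is required at this stage.

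Next, I would identify $N$ linearly with $\mathfrak{g}_\mu^{\ast}\times N_s$ compatibly with the splitting $\mathfrak{g}=\mathfrak{g}_\mu\oplus\mathfrak{g}_\mu^\perp$ used in defining $\Omega_Z$. The reduction lemma gives $(\mathfrak{g}z)^\Omega=\ker dJ(z)$, and a dimension count shows that $dJ(z)$ maps $N$ onto a complement of $T_\mu\mathcal{O}_\mu$ in $\mathfrak{g}^\ast$ of dimension $\dim\mathfrak{g}_\mu$, which we identify with $\mathfrak{g}_\mu^\ast$; meanwhile $\tilde N_s:=N\cap\ker dJ(z)$ is a canonical lift of $N_s=\ker dJ(z)/\mathfrak{g}_\mu z$. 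Splitting $N=V_1\oplus\tilde N_s$ with $dJ(z)\colon V_1\to\mathfrak{g}_\mu^\ast$ an isomorphism, a direct computation of $\tau_0^\ast\Omega$ at $(e,0,0)$ recovers $\Omega_Z(e,0,0)$: the $\mathfrak{g}$-$\mathfrak{g}$ block gives $\langle\mu,[\xi_1,\xi_2]\rangle = \Omega_T$ via $\Ad^\ast$-equivariance of $J$, the $\mathfrak{g}$-$\mathfrak{g}_\mu^\ast$ cross terms give the pairing part of $\Omega_0$ via $\Omega(\xi z,v)=\langle dJ(z)v,\xi\rangle$, and the $\tilde N_s$-block is $\Omega_{N_s}$ by its construction as the reduced form.

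Third, I would invoke an equivariant relative Darboux argument. The path $\Omega_t=(1-t)\Omega_Z+t\,\tau_0^\ast\Omega$ is closed, $G$-invariant, and agrees with the symplectic $\Omega_Z$ at $(e,0,0)$, hence is symplectic on a $G$-invariant neighbourhood. A $G$-invariant primitive $\alpha$ of $\Omega_Z-\tau_0^\ast\Omega$ vanishing at $(e,0,0)$ is obtained from the relative Poincar\'e lemma (using a radial homotopy on the slice, combined with extension by $G$-equivariance); the Moser vector field $X_t=-\Omega_t^{-1}\alpha$ is $G$-equivariant and vanishes at $(e,0,0)$, so its flow $\varphi_t$ is defined on a $G$-invariant neighbourhood $Y$ and satisfies $\varphi_1^\ast\tau_0^\ast\Omega=\Omega_Z$. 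Setting $\tau:=\tau_0\circ\varphi_1$ gives the desired symplectic tube. Finally, $G$-equivariance of $\tau$ and $J$ yields $J_Y(g,\nu,\rho)=\Ad^\ast_{g^{-1}}J_Y(e,\nu,\rho)$, and $J_Y(e,\nu,\rho)=\mu+\nu$ holds because $\tilde N_s\subset\ker dJ(z)$ kills the $\rho$-dependence while the identification $V_1\cong\mathfrak{g}_\mu^\ast$ was made via $dJ(z)$.

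The main obstacle is arranging the Moser step to preserve the momentum map \emph{exactly} rather than only infinitesimally: a naive Moser flow might perturb $J\circ\tau$ away from the desired normal form. The standard remedy is Marle's device of choosing $\alpha$ so that $X_t$ is tangent to the level sets of the projection $(g,\nu,\rho)\mapsto\Ad^\ast_{g^{-1}}(\mu+\nu)$, which forces the momentum map to be preserved along the isotopy and thus delivers the stated formula for $J_Y$ on the nose.
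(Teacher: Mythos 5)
The paper does not prove Theorem \ref{marle} at all: it is quoted from the literature (\cite{GS84, BL97}), and the authors explicitly remark that no \emph{constructive} proof is known even for free actions. So there is no in-paper argument to compare against; what you have written is essentially the standard proof of the Marle--Guillemin--Sternberg normal form in the free case (Palais tube plus equivariant relative Darboux/Moser along the orbit $Gz$), and its overall architecture is sound.

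Two steps are glossed over in a way that would fail as written. First, the claim that ``a direct computation of $\tau_0^\ast\Omega$ at $(e,0,0)$ recovers $\Omega_Z(e,0,0)$'' is not true for an arbitrary splitting $N=V_1\oplus\tilde N_s$ with $dJ(z)\colon V_1\to\mathfrak{g}_\mu^\ast$ an isomorphism: nothing forces the blocks $\Omega(z)(V_1,V_1)$ and $\Omega(z)(V_1,\tilde N_s)$ to vanish, whereas $\Omega_Z(e,0,0)$ has no such terms. You need the Witt--Artin decomposition here: choose $V_1$ to be a \emph{Lagrangian} complement to $\mathfrak{g}_\mu z$ inside the symplectic orthogonal of $\mathfrak{g}_\mu^\perp z\oplus\tilde N_s$, so that $V_1$ is isotropic, pairs nondegenerately with $\mathfrak{g}_\mu z$ (this is what identifies it with $\mathfrak{g}_\mu^\ast$), and is $\Omega$-orthogonal to $\tilde N_s$. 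Only then do the two forms agree at the base point and the Moser argument can start. Second, your verification of $J_Y(g,\nu,\rho)=\Ad^\ast_{g^{-1}}(\mu+\nu)$ is only an infinitesimal statement about $J\circ\tau_0$ at the base point; $J\circ\tau_0$ equals $\mu+\nu$ plus higher-order terms, and the Moser flow perturbs it further. Rather than arranging the Moser field to be tangent to momentum levels, the clean fix is: once $\tau$ is a $G$-equivariant symplectomorphism, $J\circ\tau$ is an equivariant momentum map for the action on $(Y,\Omega_Y)$; a direct check shows $(g,\nu,\rho)\mapsto\Ad^\ast_{g^{-1}}(\mu+\nu)$ is also one (this uses that $\nu\in\mathfrak{g}_\mu^\ast$ annihilates $\mathfrak{g}_\mu^\perp$); two equivariant momentum maps on a connected space differing by a constant and agreeing at $(e,0,0)$ coincide. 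With these repairs your argument is the standard one.
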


\medskip

No general constructive proof of this theorem is known, even for free actions. However a constructive proof is given in \cite{Sch07} for the special case of a cotangent-lifted action, not necessarily free, for which $G_\mu = G$.

\subsection{Symplectic slices for the cotangent bundle of a Lie group}\label{ssect:cotbunslice}

We now consider the special case of $G$ acting on $T^*G$ by the cotangent lift of 
left multiplication. 
We left-trivialise $T^*G$, meaning that we identify it with $G \times \lieg^*$
via the map $p \in T_gG \mapsto (g,\mu) := (g, g^{-1} p)$, with $g^{-1}p := D\Phi_g(e)^* (p)$.
%dot denoting the cotangent-lifted action.
%$G \times \lieg^* \longrightarrow T^*G, \, \,
%(g, p) \longmapsto (g, (D\Phi_{g^{-1}}(g))^*p  ). 
%$
We seek a constructive symplectic tube based at a general $(g,\mu) \in G \times \lieg^*$ satisfying the conditions of the Hamiltonian Slice Theorem  (Theorem \ref{marle}).
Without loss of generality, we will assume $g = e$ (the identity). 
We will also assume that $\mu \ne 0$, since in the case $\mu = 0$ we have $G_\mu = G$
and a trivial symplectic normal space, so Theorem \ref{marle} is trivial.
% since the model space is just the original $G\times \lieg^*$.

Using left-trivialisation,
the canonical symplectic form becomes:
\begin{align*}
\Omega_c(e,\mu)\left((\xi_1, \rho_1), (\xi_2, \rho_2)\right) 
&= <\mu, [\xi_1,\xi_2]> + <\rho_2, \xi_1> - <\rho_1, \xi_2> .
%\\
%&= <\mu, [\xi_1^\perp,\xi_2^\perp]> 
%+ <\rho_2^\mu, \xi_1^\mu> - <\rho_1^\mu, \xi_2^\mu> 
%+ <\rho_2^\perp, \xi_1^\perp> - <\rho_1^\perp, \xi_2^\perp> 
\end{align*}
The $G$ action on $T^*G$ becomes $h (g,\nu) = (hg, \nu)$, 
which has momentum map
$J(g,\nu) = \Ad\nolimits_{g^{-1}}^* \nu$.
Note that $J$ is $\Ad^*$-equivariant) and
\begin{align}\label{E:DJ}
DJ(e,\mu)\cdot (\xi,\rho) = \xi \cdot \mu + \rho = -\ad\nolimits_\xi^*\mu + \rho.
\end{align}

\medskip

Fix a $\mu \in \lieg^*$, $\mu \ne 0$. Choose a $G$-invariant Riemannian metric on $\lieg$, and let $\lieg_\mu^\perp$
be the orthogonal complement of $\lieg_\mu$. 
Define
\begin{align}\label{E:N1}
N_1 := \left\{
\left(\eta, \ad\nolimits_\eta^*\mu\right) : \eta \in \lieg_\mu^\perp\right\}.
\end{align}
It follows from \eqref{E:DJ} that
$N_1$ is a complement to $\lieg_\mu z$ in $\ker DJ(z)$.
Therefore $N_1$ is isomorphic to the
symplectic normal space $N_s(z)$, with the reduced symplectic bilinear form on $N_s(z)$
corresponding to the restriction of $\Omega_c(z)$ to $N_1$.
%Therefore $N_1$, with the restriction of $\Omega_c(z)$, is isomorphic to the
%symplectic normal space $N_s(z)$ with the reduced symplectic bilinear form.

\begin{lemma} \label{N1KKS}
The following is a linear symplectomorphism 
from $N_1$ (with the restricted canonical symplectic form)
to $T_\mu \mathcal{O}_\mu$ with the
KKS form $\Omega^-_\mathcal{O}(\mu)$,
\[
L:\left(\eta, \ad\nolimits_\eta^*\mu\right) \mapsto \ad\nolimits_\eta^*\mu\, .
\]
\end{lemma}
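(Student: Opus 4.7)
The plan is to verify three things about $L$: it is well-defined and linear, it is a bijection, and it pulls back $\Omega^{-}_{\mathcal{O}}(\mu)$ to the restriction of $\Omega_c(e,\mu)$ to $N_1$. Well-definedness and linearity are immediate from the defining formula and the linearity of $\eta \mapsto \ad^*_\eta \mu$. For bijectivity, I would first observe that $T_\mu \mathcal{O}_\mu = \{\ad^*_\eta \mu : \eta \in \lieg\}$ (up to sign, using the orbit parametrisation by the coadjoint action), and then use the $G$-invariant splitting $\lieg = \lieg_\mu \oplus \lieg_\mu^\perp$ to write any $\eta$ as $\eta^\mu + \eta^\perp$; since $\ad^*_{\eta^\mu}\mu = 0$, every element of $T_\mu \mathcal{O}_\mu$ equals $\ad^*_{\eta^\perp}\mu = L(\eta^\perp, \ad^*_{\eta^\perp}\mu)$, giving surjectivity. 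Injectivity follows because $L(\eta_1, \ad^*_{\eta_1}\mu) = L(\eta_2, \ad^*_{\eta_2}\mu)$ forces $\eta_1 - \eta_2 \in \lieg_\mu \cap \lieg_\mu^\perp = \{0\}$.

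For the symplectic identity, I would substitute $(\xi_i, \rho_i) = (\eta_i, \ad^*_{\eta_i}\mu)$ into the left-trivialised canonical form given in the excerpt and compute. The three terms are $\langle \mu, [\eta_1,\eta_2]\rangle$, $\langle \ad^*_{\eta_2}\mu, \eta_1\rangle$, and $-\langle \ad^*_{\eta_1}\mu, \eta_2\rangle$. Using $\langle \ad^*_\eta \mu, \zeta\rangle = \langle \mu, [\eta,\zeta]\rangle$ and antisymmetry of the bracket, the last two terms each equal $-\langle \mu, [\eta_1,\eta_2]\rangle$, so the restriction of $\Omega_c(e,\mu)$ to $N_1$ evaluates to $-\langle \mu, [\eta_1,\eta_2]\rangle$.

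On the other side, I would apply formula \eqref{E:KKS} with the minus sign: since the KKS form is bilinear, $\Omega^{-}_{\mathcal{O}}(\mu)(\ad^*_{\eta_1}\mu, \ad^*_{\eta_2}\mu)$ equals $\Omega^{-}_{\mathcal{O}}(\mu)(-\ad^*_{\eta_1}\mu, -\ad^*_{\eta_2}\mu) = -\langle \mu, [\eta_1,\eta_2]\rangle$, matching the left-hand side exactly. This is the point at which the sign choice $\Omega^{-}_{\mathcal{O}}$ (as opposed to $\Omega^{+}_{\mathcal{O}}$) becomes essential, and it is consistent with the left-trivialisation of $T^*G$ used throughout the section.

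The only potential obstacle is sign bookkeeping: in particular, keeping track of the two independent sign conventions (the choice $\Omega^{-}$ for the KKS form, and the sign arising from the parametrisation $\eta \mapsto -\ad^*_\eta \mu$ of tangent vectors to the orbit). Once these are treated carefully, the computation is a short direct verification, with no analytic difficulty.
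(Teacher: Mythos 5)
Your proposal is correct and follows essentially the same route as the paper: substitute $(\xi_i,\rho_i)=(\eta_i,\ad^*_{\eta_i}\mu)$ into the left-trivialised canonical form, simplify the three terms to $-\langle\mu,[\eta_1,\eta_2]\rangle$, and match this with $\Omega^-_{\mathcal{O}}(\mu)(\ad^*_{\eta_1}\mu,\ad^*_{\eta_2}\mu)$ via the sign bookkeeping you describe. The bijectivity argument you add (using $\lieg=\lieg_\mu\oplus\lieg_\mu^\perp$ and $\ad^*_{\eta^\mu}\mu=0$) is left implicit in the paper but is a worthwhile inclusion.
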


\begin{proof}
\begin{align*}
\Omega_c
\left(\left(\eta_1, \ad\nolimits_{\eta_1}^*\mu\right),
\left(\eta_2, \ad\nolimits_{\eta_2}^*\mu\right)\right)
&= <\mu, [\eta_1,\eta_2]> + <\ad\nolimits_{\eta_2}^*\mu, \eta_1> - <\ad\nolimits_{\eta_1}^*\mu, \eta_2> \\
&=-\left<\mu, \left[\eta_1, \eta_2\right]\right> 
= \Omega^-_{\mathcal{O}} (\mu)
\left(\ad\nolimits_{\eta_1}^*\mu,
\ad\nolimits_{\eta_2}^*\mu\right)\\
&=
\left(L^* \Omega^-_{\mathcal{O}} (\mu) \right)
\left(\left(\eta_1, \ad\nolimits_{\eta_1}^*\mu\right),
\left(\eta_2, \ad\nolimits_{\eta_2}^*\mu\right)\right).
\end{align*}
The result follows by equivariance of $L$ and invariance of the two symplectic forms.
\end{proof}

We identify $N_s(z) \cong N_1 \cong T_\mu \mathcal{O}_\mu$ via this lemma,
so that the reduced symplectic form $\Omega_{N_s}$ is identified with
$\Omega^-_\mathcal{O}(\mu)$.

\medskip

We seek a constructive version of the Hamiltonian Slice Theorem (for free actions) in this context.
That is, we wish to construct a $G$-equivariant local diffeomorphism 
\begin{align*}
\Phi: G \times \mathfrak{g}_\mu^* \times N &\longrightarrow G\times \lieg^*, \\
(e,0,0) &\mapsto (e,\mu),
\end{align*}
such that 
%where the symplectic form on $G \times \mathfrak{g}_\mu^* \times N$ is
$\Phi^*\Omega_c = \Omega_Y:=\Omega_T + \Omega_0  + \Omega_N$, where
\begin{align} \label{E:OmegaY}
\Omega_T(g,\nu,\rho)\left((\xi_1, \dot \nu_1, \dot \rho_1), (\xi_2, \dot \nu_2, \dot \rho_2)\right) 
&=  <\mu, [\xi_1,\xi_2]> ,\\
\Omega_0(g,\nu,\rho)\left((\xi_1, \dot \nu_1, \dot \rho_1), (\xi_2, \dot \nu_2, \dot \rho_2)\right) 
&= \left<\nu, \left[\xi_1,\xi_2\right]\right\rangle +
 <\dot \nu_2, \xi_1^\mu> - <\dot \nu_1, \xi_2^\mu>,\notag \\ 
\Omega_N(g,\nu,\rho)\left((\xi_1,\dot \nu_1, \ad\nolimits_{\zeta_1}^* \mu ), 
(\xi_2,\dot \nu_2, \ad\nolimits_{\zeta_2}^* \mu )\right)
&= -\left<\mu, \left[\zeta_1, \zeta_2\right]\right>
. \notag
\end{align}
\iffalse
Note that $\Omega_T$ is a symplectic form on $T_1$,
and
$\Omega_N$ is a symplectic form on $N_1$.
What about $\Omega_0$? 
For $SO(3)$ at least, 
if $\nu \ne 0$ then $\Omega_0$ is a symplectic form on $T_1 \oplus (T_0 \oplus N_0)$. 
If $\nu = 0$, then $\Omega_0$ is a symplectic form on $T_0 \oplus N_0$. 
\fi

\iffalse
In particular,
\begin{align}\label{E:OmegaY0}
\Omega_Y(e,0,0)
&\left((\xi_1, \dot \nu_1, \ad\nolimits_{\zeta_1}^* \mu), 
(\xi_2, \dot \nu_2, \ad\nolimits_{\zeta_2}^* \mu)\right)\\
&= <\mu, [\xi_1,\xi_2]> +  <\dot \nu_2, \xi_1^\mu> - <\dot \nu_1, \xi_2^\mu>
-\left<\mu, \left[\zeta_1, \zeta_2\right]\right>. \notag
\end{align}
\fi

\medskip

The following proposition, proven in the Appendix, characterises
the symplectic tubes that appear in the Hamiltonian Slice Theorem (Theorem \ref{marle}).
%The following proposition, proven in the Appendix, is a necessary and sufficient condition
%for the existence of a symplectic tube, i.e. a constructive version of the Hamiltonian Slice Theorem.

\begin{proposition}[Tube Condition]\label{tubecond}
$\Phi^* \Omega_c = \Omega_Y$ if and only if
\begin{align*}
\Phi(g, \nu, \ad\nolimits_\eta^* \mu) 
= \left(
g F(\nu, \eta)^{-1}, 
\Ad\nolimits_{F(\nu, \eta)^{-1}}^*
\left(\mu + \nu\right)
\right)
\end{align*}
for some $F:\mathfrak{g}_\mu^* \times \mathfrak{g}_\mu^\perp \to G$ such that $F(0,0) = e$ and
\begin{align*}
&\left\langle 
\mu + \nu, 
\left[ 
F(\nu, \eta)^{-1} \left(DF(\nu, \eta) \cdot \left(\dot \nu_1, \zeta_1\right)\right),
F(\nu, \eta)^{-1} \left(DF(\nu, \eta) \cdot \left(\dot \nu_2, \zeta_2\right)\right)
\right]
\right\rangle \\
&+
\left\langle
\dot \nu_2, F(\nu, \eta)^{-1} \left(DF(\nu, \eta) \cdot \left(\dot \nu_1, \zeta_1\right)\right)
\right\rangle
- \left\langle
\dot \nu_1, F(\nu, \eta)^{-1} \left(DF(\nu, \eta) \cdot \left(\dot \nu_2, \zeta_2\right)\right)
\right\rangle \\
&= \left\langle \mu, \left[\zeta_1, \zeta_2\right]\right\rangle. 
\end{align*}
\iffalse
Sufficient conditions for $\Phi^* \Omega_c = \Omega_Y$ are:
\begin{enumerate}
\item
$\left\langle (\mu + \nu),
\left[ 
F(\nu, \eta)^{-1} \left(  DF(\nu, \eta) \cdot (0, \zeta_1)\right),
F(\nu, \eta)^{-1} \left(  DF(\nu, \eta) \cdot (0, \zeta_2)\right)
\right] 
\right\rangle
= \left\langle \mu, \left[\zeta_1, \zeta_2\right]\right\rangle$;
%for all $\zeta_1,\zeta_2 \in \mathfrak{g}_\mu^\perp$;
\item
$\left( DF(\nu, \eta) \cdot (\dot \nu_1, 0)\right)$ is a multiple of
$\left( DF(\nu, \eta) \cdot (\dot \nu_2, 0)\right)$; and
%for all $\dot \nu_1,\dot \nu_2\in \mathfrak{g}_\mu^*$, 
%$\zeta_1,\zeta_2 \in \mathfrak{g}_\mu^\perp$; and
\item
$F(\nu, \eta)^{-1} \left( DF(\nu, \eta) \cdot (\dot \nu, 0)\right) \in \mathfrak{g}_\mu^\perp$;
\end{enumerate}
for all $\dot \nu, \dot \nu_1, \dot \nu_2 \in \mathfrak{g}_\mu^*$, 
$\zeta, \zeta_1, \zeta_2 \in \mathfrak{g}_\mu^\perp$.
\fi
\end{proposition}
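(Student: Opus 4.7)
My plan is to prove both implications together: first to force the form of $\Phi$ from $G$-equivariance together with symplecticity, and then to reduce $\Phi^*\Omega_c=\Omega_Y$ to the displayed identity via an explicit pullback computation in left trivialisation. Throughout I would work under the standing assumptions in the paragraph preceding the proposition, namely that $\Phi$ is a $G$-equivariant local diffeomorphism with $\Phi(e,0,0)=(e,\mu)$ defined on a connected neighbourhood of $(e,0,0)$.

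Equivariance with respect to $g'\cdot(g,\nu,\rho)=(g'g,\nu,\rho)$ on $Y$ and $g'\cdot(g,\nu)=(g'g,\nu)$ on $T^*G$ immediately gives $\Phi(g,\nu,\rho)=(g\,h(\nu,\rho),\mu'(\nu,\rho))$ for some $h$ and $\mu'$. Assuming $\Phi^*\Omega_c=\Omega_Y$, equivariance also makes the fundamental vector fields $\xi_Y$ and $\xi_{T^*G}$ $\Phi$-related, so for every $\xi\in\lieg$,
\[
d(\Phi^*J_\xi-J_{Y,\xi})=\iota_{\xi_Y}(\Phi^*\Omega_c-\Omega_Y)=0.
\]
Hence $\Phi^*J-J_Y$ is constant on the connected domain, and vanishes at $(e,0,0)$, so $\Phi^*J=J_Y$ identically. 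Evaluating at $(e,\nu,\rho)$ yields $\Ad\nolimits_{h^{-1}}^*\mu'=\mu+\nu$, whence $\mu'=\Ad\nolimits_h^*(\mu+\nu)$. Setting $F(\nu,\eta):=h(\nu,\ad\nolimits_\eta^*\mu)^{-1}$, well defined under the identification $N_s\cong\lieg_\mu^\perp$ via $\eta\mapsto\ad\nolimits_\eta^*\mu$ from Lemma~\ref{N1KKS}, delivers the claimed form with $F(0,0)=e$. Conversely, any $\Phi$ of the stated form is manifestly $G$-equivariant and sends $(e,0,0)$ to $(e,\mu)$.

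To derive the identity, write $A:=F(\nu,\eta)$ and $\chi_i:=A^{-1}\bigl(DF(\nu,\eta)\cdot(\dot\nu_i,\zeta_i)\bigr)$. Differentiating in left trivialisation and using $\tfrac{d}{dt}A_t^{-1}|_{t=0}=-A^{-1}\dot A A^{-1}$ gives
\[
D\Phi\cdot(\xi_i,\dot\nu_i,\ad\nolimits_{\zeta_i}^*\mu)=\bigl(\Ad\nolimits_A(\xi_i-\chi_i),\ \Ad\nolimits_{A^{-1}}^*\bigl(\dot\nu_i-\ad\nolimits_{\chi_i}^*(\mu+\nu)\bigr)\bigr).
\]
Substituting into $\Omega_c$ at the image point and applying $\Ad\nolimits_A[\eta_1,\eta_2]=[\Ad\nolimits_A\eta_1,\Ad\nolimits_A\eta_2]$ together with $\langle\Ad\nolimits_{A^{-1}}^*\alpha,\Ad\nolimits_A\eta\rangle=\langle\alpha,\eta\rangle$, the four mixed terms of the form $-\langle\mu+\nu,[\xi_i,\chi_j]\rangle$ cancel in pairs against the corresponding $\pm\langle\ad\nolimits_{\chi_j}^*(\mu+\nu),\xi_i\rangle$, and one is left with
\[
\Phi^*\Omega_c-\Omega_Y=-\langle\mu+\nu,[\chi_1,\chi_2]\rangle-\langle\dot\nu_2,\chi_1\rangle+\langle\dot\nu_1,\chi_2\rangle+\langle\mu,[\zeta_1,\zeta_2]\rangle,
\]
under the convention that $\dot\nu_i\in\lieg_\mu^*$ is extended to $\lieg^*$ by annihilating $\lieg_\mu^\perp$ (so that $\langle\dot\nu_i,\xi\rangle=\langle\dot\nu_i,\xi^\mu\rangle$, as implicit in the formula for $\Omega_0$). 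Demanding this to vanish on all admissible tangent vectors is precisely the identity in the statement.

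The main obstacle will be the bookkeeping in the pullback computation: the mixed $\xi$-$\chi$ terms appear both inside brackets in $\Omega_c$ and in the pairings of the $\rho$-components, so one must be scrupulous about the $\Ad^*$/$\ad^*$ duality and about consistent use of left versus right trivialisation when differentiating $A^{-1}$. A smaller subtlety is that the argument for $\Phi^*J=J_Y$ relies on the domain being connected, which is fine for a local diffeomorphism near $(e,0,0)$.
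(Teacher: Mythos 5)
Your proposal is correct, and its core computation --- left-trivialising $D\Phi$, pulling back $\Omega_c$ at the image point, and observing that the mixed $\xi$--$\chi$ terms cancel so that the $\xi$-dependence drops out entirely --- is the same calculation the paper performs, only organised differently: the paper splits into six cases by bilinearity and checks separately that the $\xi$--$\dot\nu$ and $\xi$--$\zeta$ pairings are automatically consistent for any $F$, whereas you treat general tangent vectors at once; this is purely a matter of bookkeeping, and your intermediate formulas for $D\Phi$ and for $\Phi^*\Omega_c-\Omega_Y$ agree with the paper's. The one genuinely different ingredient is how the second component of $\Phi$ is pinned down. The paper extracts $F_2=\Ad^*_{F_1}(\mu+\nu)$ from the $\xi$--$\xi$ case of the pullback identity, i.e.\ from $\left\langle \Ad^*_{F_1^{-1}}F_2-(\mu+\nu),[\xi_1,\xi_2]\right\rangle=0$ for all $\xi_1,\xi_2$; strictly speaking this only determines $\Ad^*_{F_1^{-1}}F_2-(\mu+\nu)$ modulo the annihilator of $[\lieg,\lieg]$ (harmless for $\mathfrak{so}(3)$, which is perfect, but not for a general $G$). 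You instead use the Noether-type argument that an equivariant symplectomorphism intertwines the momentum maps up to a locally constant $\lieg^*$-valued function, which vanishes at the base point since $\Phi(e,0,0)=(e,\mu)$; the resulting identity $\Phi^*J=J_Y$ determines the second component completely and for arbitrary $G$, so your route is both cleaner and slightly more robust. Your explicit remark that $\dot\nu_i\in\lieg_\mu^*$ must be extended to $\lieg^*$ by annihilating $\lieg_\mu^\perp$, so that $\langle\dot\nu_i,\xi\rangle=\langle\dot\nu_i,\xi^\mu\rangle$, correctly makes explicit a convention the paper uses silently when it identifies $\langle\dot\nu_2,\xi_1\rangle$ with the $\Omega_Y$-term $\langle\dot\nu_2,\xi_1^\mu\rangle$.
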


We have not found a general construction for a symplectic tube valid for all Lie groups $G$,
and indeed we do not expect that one will ever be found.
However we noticed, as explained in the Appendix, 
that the restriction of the Tube Condition to the subspace $\{0\} \times \{0\} \times N$
is reminiscent of the condition in the following lemma, which is proven in the Appendix.

\begin{lemma}\label{preserveKKS}
Let $\varphi:T_\mu\mathcal{O}_\mu \to \mathcal{O}_\mu$ 
be of the form $\varphi(-\ad\nolimits^*_\eta \mu) = f(\eta)\mu$ for some
$f:\mathfrak{g}_\mu^\perp \to G$.
Then $\varphi$
preserves the $-$KKS symplectic form
if and only if
\begin{align}\label{E:preserveKKS}
\left\langle\mu, \left[\zeta_1, \zeta_2\right]\right\rangle
&= \left\langle f(\eta)\mu, 
\left[\left(Df(\eta)\cdot \zeta_1\right) f(\eta)^{-1},
\left(Df(\eta)\cdot \zeta_2\right) f(\eta)^{-1} 
\right]
\right\rangle \\
&= \left\langle \mu, 
\left[f(\eta)^{-1} \left(Df(\eta)\cdot \zeta_1\right),
f(\eta)^{-1} \left(Df(\eta)\cdot \zeta_2\right)
\right]
\right\rangle \notag
\end{align}
for all $\eta,\zeta_1,\zeta_2 \in \mathfrak{g}_\mu^\perp$.
\end{lemma}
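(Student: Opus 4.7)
The plan is to read ``$\varphi$ preserves the $-$KKS symplectic form'' as the pointwise identity $(\varphi^*\Omega^{-}_{\mathcal{O}})(-\ad^*_\eta\mu) = \Omega^{-}_{\mathcal{O}}(\mu)$ between bilinear forms on $T_\mu\mathcal{O}_\mu$, and to compute both sides explicitly using the parametrisation $\eta\mapsto -\ad^*_\eta\mu$ of $T_\mu\mathcal{O}_\mu$ by $\mathfrak{g}_\mu^\perp$. Once $D\varphi$ is expressed in terms of this parametrisation, applying the definition of $\Omega^{-}_{\mathcal{O}}$ reduces the whole statement to a pairing identity on $\mathfrak{g}$.

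First I would compute $D\varphi$ at the point $-\ad^*_\eta\mu$ in the direction $-\ad^*_\zeta\mu$. Writing $\varphi(-\ad^*_{\eta + t\zeta}\mu) = \Ad^*_{f(\eta + t\zeta)^{-1}}\mu$ and combining $\tfrac{d}{dt}h(t)^{-1} = -h(t)^{-1}\dot h(t) h(t)^{-1}$ with the chain-rule identity $\tfrac{d}{dt}|_0 \Ad^*_{g(t)}\nu = \ad^*_{g(0)^{-1}\dot g(0)}\Ad^*_{g(0)}\nu$ (itself a consequence of $\Ad^*_{ab} = \Ad^*_b \Ad^*_a$ together with $\tfrac{d}{dt}|_0 \Ad^*_{h(t)} = \ad^*_{\dot h(0)}$ when $h(0)=e$), one obtains
\begin{equation*}
D\varphi\big|_{-\ad^*_\eta\mu}(-\ad^*_\zeta\mu) \;=\; -\ad^*_{\theta(\eta,\zeta)}\bigl(f(\eta)\mu\bigr),\qquad \theta(\eta,\zeta) := \bigl(Df(\eta)\cdot\zeta\bigr)f(\eta)^{-1}\in\mathfrak{g}.
\end{equation*}
Applying the definition of the $-$KKS form at $f(\eta)\mu$ to these two tangent vectors gives $-\langle f(\eta)\mu,\,[\theta(\eta,\zeta_1),\theta(\eta,\zeta_2)]\rangle$, while on the domain side $\Omega^{-}_{\mathcal{O}}(\mu)(-\ad^*_{\zeta_1}\mu,-\ad^*_{\zeta_2}\mu) = -\langle\mu,[\zeta_1,\zeta_2]\rangle$. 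Equating the two and cancelling the overall sign produces the first displayed identity of \eqref{E:preserveKKS}.

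The second equivalent form follows by moving $f(\eta)$ across the pairing: the defining property $\langle\Ad^*_{f^{-1}}\mu,X\rangle = \langle\mu,\Ad_{f^{-1}}X\rangle$, combined with the fact that $\Ad_g$ is a Lie-algebra automorphism and the elementary conversion $\Ad_{f(\eta)^{-1}}\bigl((Df(\eta)\cdot\zeta)f(\eta)^{-1}\bigr) = f(\eta)^{-1}(Df(\eta)\cdot\zeta)$ from right-trivialisation to left-trivialisation, transforms the right-hand side of the first identity into the right-hand side of the second. The main obstacle is not conceptual but notational: one must keep straight the two trivialisations $(Df)f^{-1}$ versus $f^{-1}(Df)$, the sign in the infinitesimal coadjoint action, and the ordering convention for $\Ad^*_{ab}$. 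After fixing these conventions the argument is a direct chain-rule calculation together with a single application of the $-$KKS definition.
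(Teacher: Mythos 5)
Your proof is correct and follows essentially the same route as the paper's: compute $D\varphi$ at $-\ad^*_\eta\mu$ as $-\ad^*_{(Df(\eta)\cdot\zeta)f(\eta)^{-1}}\bigl(\Ad^*_{f(\eta)^{-1}}\mu\bigr)$ via the chain rule, plug into the definition of the $-$KKS form at $f(\eta)\mu$, and pass to the second displayed identity by $\Ad^*$-invariance of the pairing together with $\Ad_{f(\eta)^{-1}}\bigl((Df(\eta)\cdot\zeta)f(\eta)^{-1}\bigr)=f(\eta)^{-1}(Df(\eta)\cdot\zeta)$. If anything, your derivation of the tangent map is spelled out more carefully (and with more consistent signs) than the paper's, which simply asserts the corresponding identification.
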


This was the inspiration that led to the constructive slice theorem in the next section.

\subsection{A constructive slice theorem for $T^*SO(3)$}\label{ssect:sliceSO3}

For the reason outlined above, we consider
maps $\varphi:T_\mu\mathcal{O}_\mu\to \mathcal{O}_\mu$ 
such that $\varphi(\0) = \mu$ and $D\varphi(\0)$ is the identity,
that preserve the $-$KKS form.
%Our motivation, as explained above and in the Appendix, is the similarity of part
%of the Tube Condition to a condition for a certain class of maps to preserve the $-$KKS symplectic form
%(see \eqref{E:slicecond} and \eqref{E:preserveKKS} of the Appendix).
The KKS forms for $SO(3)$, for any $\mu$, are $\frac{1}{\|\mu\|}$ 
times the signed area form on
$\mathcal{O}_\mu \cong S^2$,
with the sign corresponding to the outward-pointing normal for the $+$KKS form,
and the inward-pointing normal for the $-$KKS form.
Thus a map $\varphi:T_\mu\mathcal{O}_\mu\to \mathcal{O}_\mu$ that preserves the $\pm$KKS form
is just an area-preserving map from $\R^2$ to $S^2(\|\mu\|)$,
where $S^2(\|\mu\|)$ is the sphere of radius $\|\mu\|$ centred at the origin.

%For a given $\mu$, there is a unique 
%area-preserving map from $\R^2$ to $S^2(\|\mu\|)$ such that $\varphi(\0) = \mu$ and $D\varphi(\0)$ is the %identity.
%It is equivariant with respect to $G_\mu \cong SO(2)$.
Without loss of generality, we consider $\mu = (0,0,\mu_z)$, with $\mu_z > 0$.
Consider the usual polar coordinates $(r,\theta)$ on the plane and spherical coordinates $(\theta, \phi)$
on the unit sphere, where $\theta$ is usual angle coordinate in the $xy$-plane, and $\phi$ is the angle from the positive $z$ axis.
Note that the signed area
$d\theta \wedge \d\phi$ is the $-$KKS form.
%Since $G_\mu$ consists of rotations around the $z$ axis, the equivariance property in these coordinates is %just conservation of $\theta$.
We seek an area-preserving map $\varphi:T_\mu\mathcal{O}_\mu\to \mathcal{O}_\mu$,
such that $\varphi(\0) = \mu$ and $D\varphi(\0) = Id$,
and require also that $\varphi$ be equivariant with respect to $G_\mu$, 
which consists of rotations around the $z$ axis.
We make an Ansatz that $\varphi$ preserves $\theta$.
It can be shown that the unique $\varphi$ satisfying all of these requirements
is given by
\[
\phi = 2 \arcsin \left(\frac{r}{2\|\mu\|}\right).
\]
%Composing this with $L$ from Lemma \ref{N1KKS}
%and identifying $N_1$ with $N_s$,
% gives a symplectic map
%$F:=F'\circ L: N_1 \to \mathcal{O}_\mu$.
To write this in the form of Lemma \ref{preserveKKS},
$\varphi\left(-\ad_\eta^* \mu\right) = \Ad^*_{f(\eta)^{-1}}\mu$, we define
\[
f(\eta) := \exp \left( 2 \arcsin \left(\frac{\| \eta \|}{2}\right) \frac{\eta}{\|\eta\|}\right).
\]
where $\exp$ be the usual matrix exponential.
% which is also the Riemannian exponential for the Killing metric on $SO(3)$.

\smallskip

Comparing  \eqref{E:slicecond} and \eqref{E:preserveKKS2} in the Appendix, 
we may guess that a factor involving $\|\mu\| / \| \mu + \nu\|$ should be inserted
in order to produce a symplectic tube.
The solution may be discovered by trial and error, 
however we will proceed systematically from the Ansatz
\begin{equation}\label{E:Ansatz}
F(\nu, \eta) = \exp\left(h(\nu,\eta) \,  \frac{\eta}{\|\eta\|}\right),
\end{equation}
for some real-valued $h$.
Note that the term $F(\nu, \eta)^{-1} \left(DF(\nu, \eta)\cdot (\dot \nu, \zeta)\right)$ that appears in 
the Tube Condition in Proposition \ref{tubecond} takes the following form when $\dot \nu = 0$,
\begin{align*}
F(\nu, \eta)^{-1} \left(DF(\nu, \eta)\cdot (0, \zeta)\right)
= \exp \left(- \hat \v\right) \left.\frac{d}{dt} \right|_{t=0} \exp \left(\hat \v + t \hat \w\right),
\end{align*}
where $\hat \v := h(\nu,\eta) \frac{\eta}{\|\eta\|}$
and $\hat \w := h(\nu,\eta) \frac{\zeta}{\|\eta\|}$, 
and the hat map $\v \mapsto \hat \v$ is defined by 
\[
\hat \v = \begin{pmatrix}
0 & -v_3 & v_2 \\
v_3 & 0 & -v_1 \\
-v_2 & v_1 & 0 
\end{pmatrix}.
\]
We compute this quantity with the aid of \textbf{Rodrigues' rotation formula} (see \cite{MR99}):
\begin{align*}
\exp (\hat \v) 
&= I + \frac{\sin \|\v\|}{\|\v\|} \hat \v+ \frac{1 - \cos \|\v\|}{\|\v\|^2} \hat \v^2 
= I + \frac{\sin \|\v\|}{\|\v\|} \hat \v+ \frac{2 \sin^2 \frac{\|\v\|}{2}}{\|\v\|^2} \hat \v^2.
\end{align*}

\begin{lemma}
For general orthogonal $\v$ and $\w$,
\begin{align} \label{E:Dexpperp}
\exp \left(- \hat \v\right) \left.\frac{d}{dt} \right|_{t=0} \exp \left(\hat \v + t \hat \w\right)
& = \frac{\sin \|\v\|}{\|\v\|}
 \, \hat \w - \frac{2\sin^2 \frac{\|\v\|}{2}}{\|\v\|^2} \, \left(\v \times \w\right)\hat{} \,.
\end{align}
\end{lemma}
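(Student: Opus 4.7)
The plan is to recognise the left-hand side as the classical "left-trivialised" derivative of the matrix exponential, namely
\[
\exp(-\hat v)\, \frac{d}{dt}\Big|_{t=0}\exp(\hat v + t\hat w) = \sum_{k=0}^\infty \frac{(-1)^k}{(k+1)!}\, \ad_{\hat v}^{\,k}(\hat w),
\]
which follows from the standard formula $D\exp(\hat v)=\exp(\hat v)\circ\frac{1-e^{-\ad_{\hat v}}}{\ad_{\hat v}}$. In $\mathfrak{so}(3)$ one has $\ad_{\hat v}\hat w=[\hat v,\hat w]=\widehat{v\times w}$, so everything reduces to evaluating iterated cross products.

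Next I would exploit the orthogonality hypothesis $v\perp w$. By the vector triple product, $v\times(v\times w)=(v\cdot w)\,v-\|v\|^2 w=-\|v\|^2w$, so the $\ad_{\hat v}$ operator stabilises the two-dimensional subspace spanned by $\hat w$ and $\widehat{v\times w}$ and its square acts on $\hat w$ as multiplication by $-\|v\|^2$. Consequently $\ad_{\hat v}^{2m}\hat w=(-\|v\|^2)^m\hat w$ and $\ad_{\hat v}^{2m+1}\hat w=(-\|v\|^2)^m\widehat{v\times w}$.

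Splitting the series into even and odd indices then collapses the two resulting power series to elementary functions of $\|v\|$:
\[
\sum_{m\ge 0}\frac{(-\|v\|^2)^m}{(2m+1)!}=\frac{\sin\|v\|}{\|v\|},\qquad
\sum_{m\ge 0}\frac{(-\|v\|^2)^m}{(2m+2)!}=\frac{1-\cos\|v\|}{\|v\|^2}=\frac{2\sin^2(\|v\|/2)}{\|v\|^2}.
\]
The first series multiplies $\hat w$ and comes with a $+$ sign (from $(-1)^{2m}$), while the second multiplies $\widehat{v\times w}$ and inherits a $-$ sign (from $(-1)^{2m+1}$). Assembling these gives precisely \eqref{E:Dexpperp}.

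An alternative, entirely self-contained route would be to apply Rodrigues' formula directly to $\exp(\widehat{v+tw})$ and differentiate in $t$, using the crucial simplification that $\|v+tw\|^2=\|v\|^2+t^2\|w\|^2$ when $v\perp w$, so $\tfrac{d}{dt}\|v+tw\|\big|_{t=0}=0$ and only the linear term in $\widehat{v+tw}$ and the derivative of $(\widehat{v+tw})^2=\hat v\hat w+\hat w\hat v$ survive; then left-multiplying by $\exp(-\hat v)$ and simplifying products such as $\hat v^2\hat w$ via orthogonality reproduces the same result. The only place where one must be careful is cancelling the "mixed" contributions $\hat v\hat w\hat v$, etc., which is precisely where the orthogonality assumption intervenes; this is the one spot requiring attention, but it is routine once the identity $v\times(v\times w)=-\|v\|^2 w$ is in hand.
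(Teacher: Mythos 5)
Your proof is correct, and it takes a genuinely different route from the paper's. The paper invokes the naturality of $\exp$ and the identity $R\hat \w R^{-1} = (R\w)^{\hat{}}$ to reduce to the canonical case $\v = (v_x,0,0)$, $\w=(0,w_y,0)$, and then verifies the formula by a direct matrix computation. You instead start from the general left-trivialised derivative of the exponential, $\exp(-\hat\v)\,D\exp(\hat\v)\cdot\hat\w = \sum_{k\ge 0}\tfrac{(-1)^k}{(k+1)!}\ad_{\hat\v}^k\hat\w$, and then use the $\mathfrak{so}(3)$ identities $[\hat\v,\hat\w]=(\v\times\w)^{\hat{}}$ and $\v\times(\v\times\w)=-\|\v\|^2\w$ (valid since $\v\perp\w$) to collapse the even and odd parts of the series to $\tfrac{\sin\|\v\|}{\|\v\|}$ and $\tfrac{1-\cos\|\v\|}{\|\v\|^2}=\tfrac{2\sin^2(\|\v\|/2)}{\|\v\|^2}$ respectively; I have checked the signs and both series sums, and they are right. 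Your argument buys more insight — it makes transparent where the two terms and their coefficients come from, and it isolates exactly where orthogonality is used (in making $\ad_{\hat\v}^2$ act as $-\|\v\|^2$ on $\hat\w$) — at the cost of assuming the standard formula for $D\exp$, which should be cited. The paper's route is shorter to state but hides the content in an unspecified ``straightforward calculation.'' Your sketched alternative via direct differentiation of Rodrigues' formula is closer in spirit to the paper's toolkit and would also work.
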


\begin{proof}
By the naturality property of $\exp$, and the fact that 
$R\hat \w R^{-1} = \left(R \w\right)^{\hat{}}$, it suffices to prove the claim for 
$\v = (v_x,0,0)$ and $\w = (0,w_y,0)$.
This is a straightforward calculation.
\end{proof}

\begin{lemma}\label{zetaperp}
If $\eta, \zeta\in \lieg_\mu^\perp$ and $\zeta$ is perpendicular to $\eta$,
then
\begin{align*}
F(\nu, \eta)^{-1} \left(DF(\nu, \eta)\cdot (\dot \nu, \zeta)\right)
& = 
\frac{\sin h}{\|\eta\|} \, \hat \zeta
 - \frac{2\sin^2 \frac{h}{2}}{\|\eta\|^2} \, \left(\eta \times \zeta\right)\hat{} \,,
\end{align*}
and $\eta \times \zeta \in \lieg_\mu$.
\end{lemma}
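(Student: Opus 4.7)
The proof is a direct computation from the Ansatz \eqref{E:Ansatz}. I would write $F(\nu,\eta) = \exp(\hat A)$ with vector $A(\nu,\eta) := h(\nu,\eta)\,\eta/\|\eta\|\in\R^3$, and compute $\dot A := DA\cdot(\dot\nu,\zeta)$. Since $\zeta$ is perpendicular to $\eta$, $\frac{d}{dt}\big|_{t=0}\|\eta+t\zeta\|=(\eta\cdot\zeta)/\|\eta\|=0$, so
\[
\frac{d}{dt}\bigg|_{t=0}\frac{\eta+t\zeta}{\|\eta+t\zeta\|} = \frac{\zeta}{\|\eta\|}
\]
and hence
\[
\dot A = \bigl(Dh\cdot(\dot\nu,\zeta)\bigr)\frac{\eta}{\|\eta\|} + h\,\frac{\zeta}{\|\eta\|}.
\]
The first summand is parallel to $A$; the second is perpendicular to $A$, again because $\zeta\perp\eta$.

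Next I would transport $\dot A$ through $\exp$. For any perturbation parallel to $A$, $\exp(-\hat A)\frac{d}{dt}|_{t=0}\exp(\hat A + t\lambda\hat A) = \lambda\hat A$ because $\hat A$ commutes with itself, so the parallel piece contributes $(Dh\cdot(\dot\nu,\zeta))\hat\eta/\|\eta\|$ to the output. For the Lemma's formula to hold as stated this parallel contribution must vanish, which is the one structural point of the proof: the Ansatz is to be specialised in the sequel so that $h$ depends on $\eta$ only through $\|\eta\|$ (giving $D_\eta h\cdot\zeta=0$ whenever $\zeta\perp\eta$), with an analogous rotational dependence on $\nu$ that kills the $\dot\nu$ contribution. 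I would make this dependence explicit at this step.

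For the perpendicular piece $\dot A_\perp = h\,\zeta/\|\eta\|$, I would invoke \eqref{E:Dexpperp} with $\v = h\eta/\|\eta\|$ and $\w = h\zeta/\|\eta\|$. Then $\|\v\| = h$ and $\v\times\w = h^2(\eta\times\zeta)/\|\eta\|^2$, and substituting into that lemma yields
\[
\frac{\sin h}{h}\cdot \frac{h\hat\zeta}{\|\eta\|} - \frac{2\sin^2(h/2)}{h^2}\cdot\frac{h^2(\eta\times\zeta)\hat{}}{\|\eta\|^2} = \frac{\sin h}{\|\eta\|}\hat\zeta - \frac{2\sin^2(h/2)}{\|\eta\|^2}(\eta\times\zeta)\hat{},
\]
which is precisely the claimed expression.

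Finally, for the assertion $\eta\times\zeta\in\lieg_\mu$: by the normalisation $\mu=(0,0,\mu_z)$ with $\mu_z>0$ fixed earlier in the section, $\lieg_\mu$ is the $z$-axis and $\lieg_\mu^\perp$ is the $xy$-plane, so any $\eta,\zeta\in\lieg_\mu^\perp$ are planar and $\eta\times\zeta$ points along the $z$-axis, i.e.\ lies in $\lieg_\mu$. The main obstacle I anticipate is keeping the parallel/perpendicular bookkeeping airtight and stating cleanly the constraints on $h$ that the Lemma tacitly imposes.
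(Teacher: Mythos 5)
Your computation of the perpendicular contribution is correct and is essentially the paper's own argument: perpendicularity of $\zeta$ to $\eta$ kills the first-order variation of $\|\eta\|$, and then \eqref{E:Dexpperp} with $\v = h\,\eta/\|\eta\|$ and $\w = h\,\zeta/\|\eta\|$ gives exactly the stated formula; the observation that $\eta\times\zeta\in\lieg_\mu$ because $\lieg_\mu^\perp$ is the plane orthogonal to $\mu$ is also the intended one. You are right, and in fact more careful than the paper, that one must additionally assume $h$ depends on $\eta$ only through $\|\eta\|$, so that $D_\eta h\cdot\zeta=0$ for $\zeta\perp\eta$: the paper's proof silently freezes $h$ at $h(\nu,\eta)$ inside the exponential, which is precisely this assumption (made explicit only later, before \eqref{E:etaparperp2}, and satisfied by the $h$ of Theorem \ref{SO3tube}).

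Where your proposal goes wrong is the $\dot\nu$-contribution. There is no ``rotational dependence on $\nu$'' that kills it: for the $h$ of Theorem \ref{SO3tube} one has $\partial h/\partial\nu\neq 0$, and the parallel term $\left(\frac{\partial h}{\partial\nu}\,\dot\nu\right)\frac{\eta}{\|\eta\|}$ genuinely survives --- it is exactly the term that appears in the ``$\zeta_1=0$, $\zeta_2$ perpendicular to $\eta$'' case and produces condition \eqref{E:nuetaperp}; if $\partial h/\partial\nu$ were forced to vanish, \eqref{E:nuetaperp} could not hold and the tube construction would fail. The correct resolution is that the lemma is only ever invoked with $\dot\nu=0$: the paper's own proof computes $DF(\nu,\eta)\cdot(0,\zeta)$, and the $(\dot\nu,\zeta)$ appearing in the statement is a slip of notation. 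So keep your parallel/perpendicular decomposition, but drop the claim that the $\dot\nu$ term is killed by the choice of $h$, and instead restrict the lemma to tangent vectors of the form $(0,\zeta)$, handling the nonzero $\dot\nu$-part separately as the paper does in its subsequent case analysis.
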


\begin{proof}
$\displaystyle
\left.\frac{d}{dt} \right|_{t=0} \|\eta + t \zeta\| = 0
$ and
$\displaystyle
\left.\frac{d}{dt} \right|_{t=0} \frac{\eta + t\zeta}{\|\eta + t \zeta\|} 
= \frac{\zeta}{\|\eta\|}
$.
Then 
\[
DF(\nu, \eta)\cdot (0, \zeta)
= 
\left.\frac{d}{dt} \right|_{t=0} 
\exp \left(
%\left(\frac{\pa h}{\pa \eta}(\nu, \eta) \cdot \zeta\right) \frac{\eta}{\|\eta\|}+ 
h(\nu, \eta)\, \frac{\eta}{\|\eta\|}
+ t \, h(\nu, \eta)\, \frac{\zeta}{\|\eta\|}
\right).
\]

From \eqref{E:Dexpperp}, 
with $\hat \v = h(\nu,\eta) \frac{\eta}{\|\eta\|}$
and $\hat \w = h(\nu,\eta) \frac{\zeta}{\|\eta\|}$,
\begin{align*}
F(\nu, \eta)^{-1} \left(DF(\nu, \eta)\cdot (\dot \nu, \zeta)\right)
&=
\exp \left(- \hat \v\right) \left.\frac{d}{dt} \right|_{t=0} \exp \left(\hat \v + t \hat \w\right)\\
& = \frac{\sin \|\v\|}{\|\v\|}
 \, \hat \w - \frac{2\sin^2 \frac{\|\v\|}{2}}{\|\v\|^2} \, \left(\v \times \w\right)\hat{} \\
& = \frac{\sin h}{h}
 \, \hat \w - \frac{2\sin^2 \frac{h}{2}}{h^2} \, \left(\v \times \w\right)\hat{} \\
& = 
\frac{\sin h}{\|\eta\|} \, \hat \zeta
 - \frac{2\sin^2 \frac{h}{2}}{\|\eta\|^2} \, \left(\eta \times \zeta\right)\hat{} \,.
\end{align*}
Since $\eta,\zeta\in \lieg_\mu^\perp$ and $\eta \perp \zeta$,
it follows that
$\eta \times \zeta \in \lieg_\mu$.
\end{proof}

\medskip
\noindent
We now calculate the Tube Condition in Proposition \ref{tubecond} under the Ansatz \eqref{E:Ansatz}.
The following lemma covers the case of $\zeta_1,\zeta_2$ both parallel to $\eta$,
which includes the case of $\zeta_1 = \zeta_2 = 0$ (the ``$\dot \nu - \dot \nu$'' case).
Though motivated by our study of the $SO(3)$ case, the following lemma
applies to general $G$. It is proven in the Appendix.

\begin{lemma}\label{lem:ansatz}
Suppose $\displaystyle F(\nu, \eta) = \exp\left(h(\nu,\eta) \,  \frac{\eta}{\|\eta\|}\right)$,
for some $h: \lieg_{\mu}^* \times \lieg_\mu^\perp \to \R$. 
Then the Tube Condition in Proposition \ref{tubecond} is automatically satisfied
(regardless of the definition of $h$) for all 
$\left(\dot \nu_1, \zeta_1\right), \left(\dot \nu_2, \zeta_2\right)$ such that
$\zeta_1$ and $\zeta_2$ are parallel to $\eta$.
\end{lemma}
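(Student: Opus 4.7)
The key observation I would exploit is that when $\zeta$ is parallel to $\eta$, say $\zeta = c\eta$, the unit-vector direction in the Ansatz is fixed along the curve: $\frac{\eta + tc\eta}{\|\eta + tc\eta\|} = \frac{\eta}{\|\eta\|}$ for $t$ in a neighbourhood of $0$. Consequently, the derivative of $F$ in the direction $(\dot\nu, \zeta)$ only affects the scalar coefficient $h(\nu,\eta)$, not the Lie-algebra direction. Writing $\alpha_i(t) := h(\nu + t\dot\nu_i, \eta + t\zeta_i)$, I would compute
\[
DF(\nu,\eta)\cdot(\dot\nu_i,\zeta_i)
= \left.\frac{d}{dt}\right|_{t=0}\exp\!\left(\alpha_i(t)\,\frac{\eta}{\|\eta\|}\right)
= \dot\alpha_i(0)\,F(\nu,\eta)\cdot\frac{\eta}{\|\eta\|},
\]
so $F(\nu,\eta)^{-1}\bigl(DF(\nu,\eta)\cdot(\dot\nu_i,\zeta_i)\bigr) = \dot\alpha_i(0)\,\eta/\|\eta\|$. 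In particular, both of these vectors lie in the one-dimensional subspace $\R\eta \subset \mathfrak{g}_\mu^\perp$.

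With this in hand, I would verify that the Tube Condition in Proposition~\ref{tubecond} holds term by term. The right-hand side $\langle\mu,[\zeta_1,\zeta_2]\rangle$ vanishes because $\zeta_1$ and $\zeta_2$ are both multiples of $\eta$, hence commute. The first term on the left vanishes for the same reason, since $F^{-1}DF\cdot(\dot\nu_1,\zeta_1)$ and $F^{-1}DF\cdot(\dot\nu_2,\zeta_2)$ are both multiples of $\eta$. Finally, the two remaining terms $\langle\dot\nu_2,\,F^{-1}DF\cdot(\dot\nu_1,\zeta_1)\rangle$ and $\langle\dot\nu_1,\,F^{-1}DF\cdot(\dot\nu_2,\zeta_2)\rangle$ vanish because $\dot\nu_i\in\mathfrak{g}_\mu^*$ pairs only with the $\mathfrak{g}_\mu$ component of its argument (interpreted via the fixed $G$-invariant splitting $\mathfrak{g}=\mathfrak{g}_\mu\oplus\mathfrak{g}_\mu^\perp$), whereas $\dot\alpha_i(0)\,\eta/\|\eta\|$ lies entirely in $\mathfrak{g}_\mu^\perp$.

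I do not anticipate a serious obstacle: the proof is essentially a one-line geometric observation (radial variations do not move the direction) followed by three vanishings. The only point that requires a bit of care is the pairing convention for the $\dot\nu$-terms and the verification that $\R\eta\subset\mathfrak{g}_\mu^\perp$, which is immediate from $\eta\in\mathfrak{g}_\mu^\perp$ and the fact that $\mathfrak{g}_\mu^\perp$ is a linear subspace. This is also the reason the lemma makes no use of the specific form of $h$, exactly as stated.
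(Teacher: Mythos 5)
Your proposal is correct and follows essentially the same route as the paper's proof: the observation that radial variations fix the exponent's direction, giving $F(\nu,\eta)^{-1}\left(DF(\nu,\eta)\cdot(\dot\nu_i,\zeta_i)\right)$ as a scalar multiple of $\eta/\|\eta\|$, after which the bracket terms vanish by commutativity and the $\dot\nu$-pairings vanish because the argument lies in $\lieg_\mu^\perp$. Your explicit justification of the pairing convention for the $\dot\nu$-terms is a slightly fuller account of a step the paper leaves implicit, but the argument is the same.
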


From this and the bilinearity of Condition \eqref{E:slicecond} in the Appendix, we are left with three cases to check.

\medskip
\noindent
\textbf{Case: 
$\zeta_1$ and $\zeta_2$ both perpendicular to $\eta$}.
In this case, $\zeta_1$ and $\zeta_2$ are parallel to each other, and
\begin{align*}
&\left\langle 
\mu + \nu, 
\left[ 
F(\nu, \eta)^{-1} \left(DF(\nu, \eta) \cdot \left(0, \zeta_1\right)\right),
F(\nu, \eta)^{-1} \left(DF(\nu, \eta) \cdot \left(0, \zeta_2\right)\right)
\right]
\right\rangle \\
%&+
%\left\langle
%\dot \nu_2, F(\nu, \eta)^{-1} \left(DF(\nu, \eta) \cdot \left(\dot \nu_1, \zeta_1\right)\right)
%\right\rangle
%-
%\left\langle
%\dot \nu_1, F(\nu, \eta)^{-1} \left(DF(\nu, \eta) \cdot \left(\dot \nu_2, \zeta_2\right)\right)
%\right\rangle\\
&= \left\langle 
\mu + \nu, 
\left(\frac{\sin h}{\|\eta\|} \, \zeta_2\right)
\times \left(\frac{2\sin^2 \frac{h}{2}}{\|\eta\|^2} \, \left(\eta \times \zeta_1\right) \right)
-
\left(\frac{\sin h}{\|\eta\|} \, \zeta_1 \right)
\times \left(\frac{2\sin^2 \frac{h}{2}}{\|\eta\|^2} \, \left(\eta \times \zeta_2\right)\right)
\right\rangle \\
%&-
%\left\langle
%\dot \nu_2, \frac{2\sin^2 \frac{h}{2}}{\|\eta\|^2} \, \left(\eta \times \zeta_1\right)
%\right\rangle
%+ \left\langle
%\dot \nu_1, \frac{2\sin^2 \frac{h}{2}}{\|\eta\|^2} \, \left(\eta \times \zeta_2\right)
%\right\rangle\\
&= 0 = \left\langle \mu, \left[\zeta_1, \zeta_2\right]\right\rangle.
\end{align*}
Therefore the Tube Condition in Proposition \ref{tubecond} is satisfied, for any $h$.

\medskip
\noindent
\textbf{Case: $\dot \nu_1 = \dot \nu_2 = 0$,
$\zeta_1$ is parallel to $\eta$ and
$\zeta_2$ is perpendicular to $\eta$}.\\
By Lemma \ref{zetaperp},
\begin{align*}
&\left\langle 
\mu + \nu, 
\left[ 
F(\nu, \eta)^{-1} \left(DF(\nu, \eta) \cdot \left(0, \zeta_1\right)\right),
F(\nu, \eta)^{-1} \left(DF(\nu, \eta) \cdot \left(0, \zeta_2\right)\right)
\right]
\right\rangle \\
&= \left\langle 
\mu + \nu, 
\left(\frac{\pa h}{\pa \eta} (\nu, \eta) \cdot \zeta_1\right) \frac{\eta}{\|\eta\|}
\times
\left(
\frac{\sin h}{\|\eta\|} \, \zeta_2
 - \frac{2\sin^2 \frac{h}{2}}{\|\eta\|^2} \, \left(\eta \times \zeta_2\right)
\right)
\right\rangle \\
&= \left\langle 
\mu + \nu, 
\left(\frac{\pa h}{\pa \eta} (\nu, \eta) \cdot \zeta_1\right) \frac{\eta}{\|\eta\|}
\times
\left(
\frac{\sin h}{\|\eta\|}
\right)  \, \zeta_2
\right\rangle\\
&=
\pm \|\mu + \nu\| \left(\frac{\pa h}{\pa \eta} (\nu, \eta) \cdot \zeta_1\right)
\left(\frac{\sin h}{\|\eta\|}\right)
\|\zeta_2\|,
\end{align*}
where the sign is the sign of $\mu \cdot \left(\eta \times \zeta_2\right)$.

For the Tube Condition in Proposition \ref{tubecond} to be satisfied,
this must equal
$\mu \cdot \zeta_1 \times \zeta_2
= \pm \| \mu \| \| \zeta_1\| \| \zeta_2\|$
for all $\zeta_1,\zeta_2$, 
which occurs if and only if
\begin{align}\label{E:etaparperp}
\mathrm{sgn}(\zeta_1 \cdot \eta)\|\mu + \nu\| \left(\frac{\pa h}{\pa \eta} (\nu, \eta) \cdot \zeta_1\right)
\left(\frac{\sin h}{\|\eta\|}\right)
= \| \mu \| \| \zeta_1\| .
\end{align}
If we further assume that $h$ depends on $\eta$ only through $\|\eta\|$, then \eqref{E:etaparperp}
becomes:
\begin{align}\label{E:etaparperp2}
\|\mu + \nu\| \frac{\pa h}{\pa \|\eta\|} (\nu, \|\eta\|)
\left(\frac{\sin h}{\|\eta\|}\right)
= \| \mu \|.
\end{align}

\medskip
\noindent
\textbf{Case: $\zeta_1 = 0$ and $\zeta_2$ is perpendicular to $\eta$}.
\begin{align*}
&\left\langle 
\mu + \nu, 
\left[ 
F(\nu, \eta)^{-1} \left(DF(\nu, \eta) \cdot \left(\dot \nu_1, 0\right)\right),
F(\nu, \eta)^{-1} \left(DF(\nu, \eta) \cdot \left(0, \zeta_2\right)\right)
\right]
\right\rangle \\
&+
\left\langle
\dot \nu_2, F(\nu, \eta)^{-1} \left(DF(\nu, \eta) \cdot \left(\dot \nu_1, 0\right)\right)
\right\rangle
-
\left\langle
\dot \nu_1, F(\nu, \eta)^{-1} \left(DF(\nu, \eta) \cdot \left(0, \zeta_2\right)\right)
\right\rangle \\
&=
\left\langle 
\mu + \nu, 
\left( 
\left(\frac{\pa h}{\pa \nu} (\nu, \eta) \, \dot \nu_1\right) \frac{\eta}{\|\eta\|}\right)
\times
\left(\frac{\sin h}{\|\eta\|} \, \zeta_2
\right)
\right\rangle \\
&
+
\left\langle
\dot \nu_1, 
\frac{2\sin^2 \frac{h}{2}}{\|\eta\|^2} \, \left(\eta \times \zeta_2\right)
\right\rangle \\
&=
\pm \left[ \|\mu + \nu\|
\left(\frac{\pa h}{\pa \nu} (\nu, \eta) \, \dot \nu_1 \right) 
\left(\sin h\right) \, \frac{\|\zeta_2\|}{\| \eta\|}
+
2 \dot \nu_1 \, 
\left(\sin^2 \frac{h}{2} \right)
\frac{\|\zeta_2\|}{\|\eta\|} \right],
\end{align*}
where the sign is the sign of $\mu \cdot \left(\eta \times \zeta_2\right)$.
  
For the Tube Condition in Proposition \ref{tubecond} to be satisfied, this expression must
equal zero, for all $\dot \nu_1$. If $h(\nu, \eta)\ne 0$, a factor of $\sin (h/2)$ cancels,
giving the equivalent condition
\begin{align}\label{E:nuetaperp}
\|\mu + \nu\| \,
\frac{\pa h}{\pa \nu} \, 
\cos \frac{h}{2}
+
\sin \frac{h}{2}
&=0,
\end{align}

\begin{theorem}\label{SO3tube}
Let
\begin{align*}
\Phi(g, \nu, \ad\nolimits_\eta^* \mu) 
= \left(
g F(\nu, \eta)^{-1}, 
\Ad\nolimits_{F(\nu, \eta)^{-1}}^*
\left(\mu + \nu\right)
\right),
\end{align*}
where
\[
F(\nu, \eta) = \exp \left(
2  \arcsin \left(\frac{1}{2} \| \eta \| \sqrt{\frac{\| \mu \|}{\| \mu + \nu\|}} \right)
\frac{\eta}{\| \eta \|}
\right).
\]
Then $\Phi^* \Omega_c = \Omega_Y$.
%, and $\Omega_Y(Id, 0, 0) = (Id, \mu)$.
The domain of definition of $\Phi$ is $SO(3) \times 
\left(U \subset \mathfrak{so}(3)_\mu^* \times T_\mu \mathcal{O}_\mu\right)$,
where 
\[
U = \left\{ \left(\nu, \ad\nolimits_\eta^* \mu\right) : \nu > -\|\mu\| \textrm{ and } 
\| \eta \| < 2 \sqrt{\frac{\| \mu +\nu \|}{\| \mu\|}} \right\}.
\]

\end{theorem}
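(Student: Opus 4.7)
The plan is to verify the Tube Condition of Proposition \ref{tubecond} for the proposed $\Phi$. By the preceding analysis, under the Ansatz $F(\nu,\eta)=\exp\bigl(h(\nu,\eta)\,\eta/\|\eta\|\bigr)$, Lemma \ref{lem:ansatz} together with the three cases worked out immediately before the theorem reduces the Tube Condition to the two scalar PDEs \eqref{E:etaparperp2} and \eqref{E:nuetaperp} on $h$, along with the initial condition $h(0,0)=0$ (equivalent to $F(0,0)=e$). Since the proposed $h$ depends on $\eta$ only through $r:=\|\eta\|$, my task reduces to checking that $h(\nu,r):=2\arcsin\!\bigl(\tfrac{r}{2}\sqrt{\|\mu\|/\|\mu+\nu\|}\bigr)$ satisfies both PDEs and the initial condition, and to specifying the set on which this expression is smooth.

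First I would verify \eqref{E:etaparperp2}. The defining relation $\sin(h/2)=\tfrac{r}{2}\sqrt{\|\mu\|/\|\mu+\nu\|}$, differentiated in $r$, gives $\partial h/\partial r=\sqrt{\|\mu\|/\|\mu+\nu\|}/\cos(h/2)$; using $\sin h=2\sin(h/2)\cos(h/2)$ and substituting yields
\[
\|\mu+\nu\|\,\frac{\partial h}{\partial r}\,\frac{\sin h}{r}
=\|\mu+\nu\|\cdot\frac{\sqrt{\|\mu\|/\|\mu+\nu\|}}{\cos(h/2)}\cdot\frac{2\sin(h/2)\cos(h/2)}{r}
=\|\mu\|,
\]
as required. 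For \eqref{E:nuetaperp}, since $\mu\ne 0$ the algebra $\mathfrak{so}(3)_\mu$ is one-dimensional, so $\mathfrak{so}(3)_\mu^*\cong\R$ and I may treat $\nu$ as a scalar with $\|\mu+\nu\|=\|\mu\|+\nu$. Differentiating the defining relation for $h$ in $\nu$ gives $\partial h/\partial\nu=-\tfrac{r}{2}\sqrt{\|\mu\|}\,\|\mu+\nu\|^{-3/2}/\cos(h/2)$, and then
\[
\|\mu+\nu\|\,\frac{\partial h}{\partial\nu}\,\cos\frac{h}{2}+\sin\frac{h}{2}
=-\tfrac{r}{2}\sqrt{\|\mu\|/\|\mu+\nu\|}+\sin\frac{h}{2}=0.
\]
The initial condition $h(0,0)=0$ is immediate, and Proposition \ref{tubecond} then delivers $\Phi^*\Omega_c=\Omega_Y$.

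For the domain, $\arcsin$ requires its argument in $[-1,1]$, and smoothness demands strict inequalities; together with the positivity $\|\mu+\nu\|>0$, this forces $\nu>-\|\mu\|$ and $\|\eta\|<2\sqrt{\|\mu+\nu\|/\|\mu\|}$, giving exactly the set $U$ in the statement. Smoothness of $F$ across $\eta=0$ is not an issue, because $h/\|\eta\|=(2/\|\eta\|)\arcsin(\|\eta\|\,c(\nu)/2)$ with $c(\nu)=\sqrt{\|\mu\|/\|\mu+\nu\|}$ is smooth in $\|\eta\|^2$, so $h\,\eta/\|\eta\|=(h/\|\eta\|)\,\eta$ is smooth in $\eta$. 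Combined with the general form of $\Phi$ imposed by Proposition \ref{tubecond}, this produces the claimed $G$-equivariant local diffeomorphism on $SO(3)\times U$.

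The main obstacle is not the verification but the discovery of the correct $h$: the two PDEs \eqref{E:etaparperp2} and \eqref{E:nuetaperp} are an overdetermined pair for a single function of two variables, and there is no a priori reason a closed-form compatible solution exists. The Ansatz is motivated by Lemma \ref{preserveKKS}, which in the $\nu=0$ case reduces to the area-preserving map from the plane to the sphere displayed earlier in the form $\phi=2\arcsin(r/2\|\mu\|)$, together with the observation that the factor $\sqrt{\|\mu\|/\|\mu+\nu\|}$ rescales areas from the orbit $\mathcal{O}_{\mu+\nu}$ of radius $\|\mu+\nu\|$ back to the reference orbit $\mathcal{O}_\mu$; once this Ansatz is in hand, the verifications above are essentially mechanical.
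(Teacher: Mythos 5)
Your proposal is correct and follows essentially the same route as the paper's own proof: both reduce the Tube Condition, via Lemma \ref{lem:ansatz} and the three cases worked out before the theorem, to the two scalar conditions \eqref{E:etaparperp2} and \eqref{E:nuetaperp}, and then verify these by differentiating the defining relation $\sin(h/2)=\tfrac{1}{2}\|\eta\|\sqrt{\|\mu\|/\|\mu+\nu\|}$. Your additional remarks on the domain $U$ and on smoothness of $F$ across $\eta=0$ go slightly beyond what the paper writes out, but are consistent with it.
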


\begin{proof}
Let $x =  \| \eta \| \sqrt{\frac{\| \mu \|}{\| \mu + \nu\|}}$, and
\[
h(\nu, \|\eta\|) = 2 \arcsin \frac{x}{2} =
2  \arcsin \left(\frac{1}{2} \| \eta \| \sqrt{\frac{\| \mu \|}{\| \mu + \nu\|}} \right).
\]
Then $dh/dx =2 / \sqrt{4 - x^2} = 1/\cos(h/2)$, so
\begin{align*}
%\label{E:dhdnu}
\frac{\partial h}{\partial \nu} \cos \frac{h}{2} = \frac{- \|\eta\|}{2\|\mu + \nu\|} 
\sqrt{\frac{\|\mu\|}{\| \mu + \nu \|}}, 
\end{align*}
which implies that \eqref{E:nuetaperp} is satisfied. Also,
\begin{align*}%\label{E:dhdeta}
\frac{\partial h}{\partial \|\eta\|} \sin h
= 2 \sin \frac{h}{2} \sqrt{\frac{\| \mu\|}{\| \mu + \nu \|}}
= \frac{\| \eta \| \| \mu\|}{\| \mu + \nu \|},
\end{align*}
so \eqref{E:etaparperp2} is satisfied.
\end{proof}

\begin{remark}
The restriction of $\Phi$ to a level set defined by $(R,\nu)= (Id, \nu_0)$ has as its image 
an open neighbourhood of $\mu + \nu_0$ in the coadjoint orbit $\mathcal{O}_{\mu + \nu_0}$,
which is a sphere and is isomorphic to the symplectic reduced space at $\mu + \nu_0$.
For any choice of $\nu_0$, the neighbourhood covers almost the entire sphere, excluding only the antipodal
point $-(\mu + \nu_0)$.
\end{remark}

\begin{remark}
This $\Phi$ has a limited uniqueness property.
From the Tube Condition in Proposition \ref{tubecond}, any symplectic tube must of be expressed in 
terms of an $F$ as stated in the theorem. Any $F$ can be expressed as the exponential
of some function $\lieg_\mu^* \times \lieg_\mu^\perp \mapsto \lieg$.
If that function is of the form $h(\nu, \| \eta\|) \, \eta / \| \eta \|$, then
the two conditions \eqref{E:etaparperp2} and \eqref{E:nuetaperp} are sufficient to determine $h(\nu, \| \eta \|)$.
\end{remark}

\subsection{Actions of $SO(3)$ on arbitrary configuration spaces} \label{ssect:arbitrary}

The results of the previous section can be used
to construct symplectic slices for any free and proper cotangent-lifted action of $SO(3)$
on $T^*Q$, for arbitrary $Q$. 

\begin{proposition}\label{prop_gen_tube}
Suppose $SO(3)$ acts freely on a manifold $Q$, and by cotangent lifts on $T^*Q$.
Let \begin{align*}
\tau:SO(3)\times (S \subset N)  &  \rightarrow Q\\
\left(  R,s\right)   &  \longmapsto R \exp_{q_0}s
\end{align*}
be the tube given by Theorem \ref{Palais} (Palais' Slice Theorem).
Let $\Phi: SO(3) \times \mathfrak{so(3)}_\mu^* \times T_\mu \mathcal{O}_{\mu_0} \to 
SO(3) \times \mathfrak{so}(3)^*$
be defined as in Theorem \ref{SO3tube}.
Then the following composition 
\begin{equation}\label{E:SO3tube}
SO(3) \times \mathfrak{so(3)}_\mu^* \times T_\mu \mathcal{O}_\mu \times T^*S
\overset{\left(\Phi, \mathrm{id}\right)}{\rightarrow}
SO(3) \times  \mathfrak{so(3)}^* \times  T^*S
\cong T^*(SO(3) \times S) \overset{T^*\tau^{-1}}{\rightarrow} T^*Q
\end{equation}
(where the central isomorphism is left-trivialisation)
is an $SO(3)$-equivariant symplectomorphism
with respect to the canonical symplectic form on $T^*Q$ and the symplectic form $\Omega_Y$
defined in \eqref{E:OmegaY}.
\end{proposition}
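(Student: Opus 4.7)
The plan is to verify that each of the three arrows in \eqref{E:SO3tube} is both $SO(3)$-equivariant and symplectic for suitably chosen forms on the intermediate spaces, so that the composition inherits both properties. On the middle space $T^*(SO(3)\times S)$, identified with $SO(3)\times\mathfrak{so}(3)^*\times T^*S$ by left-trivialisation of the $SO(3)$ factor alone, the relevant symplectic form is $\Omega_c+\Omega_{T^*S}^{\mathrm{can}}$, where $\Omega_c$ is the left-trivialised canonical form on $T^*SO(3)$ computed in Section \ref{ssect:cotbunslice}. The symplectic form on the domain is $\Omega_Y$ from \eqref{E:OmegaY} extended by $\Omega_{T^*S}^{\mathrm{can}}$ on the extra $T^*S$ slot; this extension is consistent with the general Hamiltonian Slice Theorem, in which the symplectic normal space for a cotangent-lifted action decomposes as $N_s\cong T_\mu\mathcal{O}_\mu\oplus T^*S$.

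First, since $\tau$ from Palais' Slice Theorem is an $SO(3)$-equivariant diffeomorphism onto an open neighbourhood of $SO(3)\cdot q_0$ in $Q$, its cotangent lift $T^*\tau^{-1}$ is automatically an $SO(3)$-equivariant symplectomorphism with the canonical symplectic forms on both sides. Second, the partial left-trivialisation $T^*(SO(3)\times S)\cong SO(3)\times\mathfrak{so}(3)^*\times T^*S$ is $SO(3)$-equivariant with respect to the action $h\cdot(g,\nu,\alpha)=(hg,\nu,\alpha)$, and by the standard decomposition of the canonical form on a product cotangent bundle it carries the canonical form to $\Omega_c+\Omega_{T^*S}^{\mathrm{can}}$, with no cross terms between the $SO(3)$ and $S$ directions.

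Third, by Theorem \ref{SO3tube}, $\Phi$ is an $SO(3)$-equivariant local diffeomorphism on the stated domain $SO(3)\times U$ that pulls $\Omega_c$ back to $\Omega_Y$. Since the identity on $T^*S$ is trivially an equivariant symplectomorphism, the product map $(\Phi,\mathrm{id})$ pulls $\Omega_c+\Omega_{T^*S}^{\mathrm{can}}$ back to $\Omega_Y+\Omega_{T^*S}^{\mathrm{can}}$, which is the intended symplectic form on the domain. Composing these three equivariant symplectomorphisms yields the proposition on the open set $SO(3)\times U\times T^*S$.

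The main thing to verify with care is not any single arrow but the fact that the $T^*S$ summand can be treated independently: one needs the extra $T^*S$ directions in the Palais tube to be symplectically and $G$-orthogonal to the group directions, so that the canonical form on $T^*(SO(3)\times S)$ indeed splits as a direct sum. This reduces to the product structure of $SO(3)\times S$ and the fact that left-trivialisation acts only on the first factor, so the canonical one-form has no cross terms; consequently the reduced symplectic form on $N_s$ splits as $\Omega^-_{\mathcal{O}}(\mu)\oplus\Omega_{T^*S}^{\mathrm{can}}$, matching the form $(\Phi,\mathrm{id})^*(\Omega_c+\Omega_{T^*S}^{\mathrm{can}})$ obtained above.
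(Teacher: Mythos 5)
Your argument is correct, and it is essentially the argument the paper intends: the paper states Proposition \ref{prop_gen_tube} without a formal proof, only describing the cotangent lift $T^*\tau^{-1}$ in coordinates, so the natural justification is exactly your factorisation into three $SO(3)$-equivariant symplectomorphisms (the cotangent lift of the Palais tube with canonical forms, the partial left-trivialisation carrying the canonical form of the product cotangent bundle to $\Omega_c+\Omega_{T^*S}^{\mathrm{can}}$ with no cross terms, and $(\Phi,\mathrm{id})$ using Theorem \ref{SO3tube}). Your closing observation — that the only point needing care is the splitting of the canonical form on $T^*(SO(3)\times S)$ and the corresponding identification $N_s\cong T_\mu\mathcal{O}_\mu\oplus T^*S$ with form $\Omega^-_{\mathcal{O}}(\mu)\oplus\Omega_{T^*S}^{\mathrm{can}}$ — is exactly the identification the paper asserts in the Introduction and again in \eqref{E:CBSsplit}--\eqref{E:CBSomegaNs}, so the proof is complete as written.
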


In the case $Q = \R^n$, we have $\tau(R, s) = R(q_0 + s)$.
We now explain the cotangent lift $T^*\tau^{-1}$ that appears in \eqref{E:SO3tube}.
Writing $q := R(q_0 + s)$,
the tangent space $T_q Q$ splits into the direct sum of two subspaces:  
\begin{align*}
\mathfrak{so}(3) q &:= \{ \xi  q : \xi \in \mathfrak{so}(3) \}
& \qquad \textrm{(``group direction'', tangent to $Gq$),} \\
R N &:= \{ R v : v \in N \} &\textrm{(``slice direction'').}
\end{align*}
The cotangent space $T_q^*Q$ has a corresponding splitting into
$\mathfrak{so}(3)^*$ (group direction) and $RN^*$ (slice direction); note that $(RN)^* = R(N^*)$.
The tangent lift of $\tau$ is given by 
\begin{equation}\label{E:tanlift}
%D_{(R,s)}\tau(\xi, \dot s) 
(\xi,\dot s) \in T_{(R,s)} (SO(3) \times S) \mapsto R\left(\xi (q_0 + s) + \dot s\right) \in T_{q} Q,
\end{equation}
where $R\left(\xi (q + s) \right)$ is in the group direction and $R\dot s$
is in the slice direction,
and we have used left-trivialisation to write $(R, \xi) \in SO(3) \times \mathfrak{so}(3) \cong T(SO(3)$.
The cotangent lift $T^*\tau^{-1}$,
also has two components, in the group and slice directions:
\begin{align*}
%(D^*\tau^{-1})_{(R,s)} 
(\mu, \sigma) \in T_{(R,s)}^*(SO(3) \times S) \mapsto (\alpha_\mu(q) + R\sigma) \in T^*_{q} Q,
\end{align*} 
where $\alpha_\mu(q) \in(\mathfrak{so}(3) q)^*$ and $R\sigma \in RN^*$.
%, and
%$\alpha_\mu$  is the \textit{mechanical connection}
To define these components explicitly,
we pair them with the components of a general tangent vector.
Since $\tau$ is a diffeomorphism, all such tangent vectors can be expressed in the form 
\eqref{E:tanlift}. We have
\begin{align*}
\left\langle \alpha_\mu(q) + R\sigma, R\left(\xi (q + s) + \dot s\right) \right\rangle
&:= \left\langle \mu, \xi\right\rangle + \left\langle \sigma, \dot s\right\rangle,
\end{align*}
i.e.
\begin{align*}
\left\langle \alpha_\mu(q), R \left(\xi (q + s) \right) \right\rangle &:= \left\langle \mu, \xi\right\rangle, \quad \textrm{and} \\
\left\langle R\sigma, R\dot s\right \rangle &:= \left\langle \sigma, \dot s\right\rangle.
\end{align*}

A similar strategy allows one to construct symplectic slices for some \textit{non-free}
actions of $SO(3)$ on general cotangent bundles. However we leave this topic for a later paper.

\section{Dynamics in slice coordinates} \label{Normal_forms_SO(3)}

%\section{Normal forms near a relative equilibrium of an $SO(3)$-invariant system} \label{Normal_forms_SO(3)}

\subsection{Dynamics on $T^*SO(3)$}

In this section we describe the motion on $T^*SO(3)$ in normal form coordinates near a fixed non-zero momentum $ (0,0,\mu_0)\in \mathbb{R}^3 \simeq so(3)^*$. 
We identify $T_{\mu_0}\mathcal{O}_{\mu_0}$ with $so(3)_{\mu_0}^\perp$ via
$\ad_\eta^*\mu_0 \mapsto \eta$.
Then the $SO(3)$-equivariant symplectomorphism given by Theorem \ref{SO3tube} takes the form 
\begin{align*}
\Phi : SO(3) \times so(3)^*_{\mu_0} \times so(3)_{\mu_0}^\perp &\to SO(3) \times so(3)^* \\
\Phi( R, \nu, \eta ) &=  \left( R \left(F(\nu,\eta ) \right)^{-1}, 
\,F(\nu, \eta) (\mu_0 +\nu)   \right)
\end{align*}
with
\begin{equation}
\label{F_formula}
F(\nu, \eta) = \exp \left(\theta \,\frac{\hat \eta}{\|\eta\|} \right)
\quad 
 \text{where}  \quad \sin \frac{\theta}{2} =  \frac{1}{2} \|\eta\| \sqrt{\frac{\|\mu_0\|}{\|\mu_0 + \nu  \|} }\,.\end{equation}
 Specifically, one has the change of variables
 \begin{align*}
 SO(3) \times so(3)^*_{\mu_0} \times so(3)_{\mu_0}^\perp &\to  SO(3) \times so(3)^* \\
( R, \nu, \eta ) &\to  \Phi( R, \nu, \eta ):= (S, \mu )
  \end{align*}
where
\begin{align}
\mu_1&=\, \eta_y \sqrt{\mu_0(\mu_0 + \nu) \left(1-\frac{\mu_0}{4(\mu_0 + \nu)}(\eta_x^2 +\eta_y^2)  \right)} \label{mu_1}\\
\, \nonumber \\
\mu_2&= - \eta_x \sqrt{\mu_0(\mu_0 + \nu) \left(1-\frac{\mu_0}{4(\mu_0 + \nu)}(\eta_x^2 +\eta_y^2) \right) }  \label{mu_2} \\
\, \nonumber \\
\mu_3 &= (\mu_0+\nu) -\frac{1}{2}\mu_0(\eta_x^2 +\eta_y^2)  \label{mu_3}
\end{align}
and $S = R F(\nu, \eta)^{-1}.$
The  symplectic form on $SO(3) \times so(3)_{\mu_0}^* \times so(3)_{\mu_0}^\perp$ is given by
%
\iffalse
\begin{equation*}
\Omega_Y(R, \nu, \eta) = \left[
\begin{array}{cccccccc}
0 & (\mu_0+\nu) & \, & 0 & 0  &\,  & 0 & 0\\
- (\mu_0+\nu) & 0  & \,& 0 & 0 &\, & 0 & 0\\
\,\\
0 & 0 & \,  & \,\,\,\,\,\,0 & 1 & \, &0&0\\
0 & 0 &\,& -1 & 0 &\,  &0&0\\
\,\\
0 & 0 &\,& 0 & 0 & \, & 0&- {\mu_0}\\
0 & 0 & \,& 0 & 0 \,& &{\mu_0}&0
\end{array}
\right] 
\end{equation*}
\fi
%
%
\begin{equation*}
\Omega_Y(R, \nu, \eta) = 
\left[
\begin{array}{ccc}
(\mu_0 + \nu) \mathbb{J} &0&0\\
0 & \mathbb{J} &0 \\
0&0& -\mu_0 \mathbb{J}
\end{array}
\right] 
\end{equation*}
where we use the notation $\displaystyle{\mathbb{J} :=\left[ \begin{array}{cc} 0&1\\1&0\end{array} \right]}\,.$
Note that this matrix does not depend on $\eta$. In $(R, \nu, \eta)$ coordinates the (spatial) momentum map  $J(S, \mu) =  S \mu$ reads:
\[J\left(R ,\nu, \eta \right) =   R \,(\mu_0+\nu\,).\]
%
%It is useful to recall  that in slice coordinates the Marsden-Weinstein reduced space at $\left(R ,\nu, \eta \right)$, which is $J^{-1}\left(R(\mu_0 +\nu ) \right)/SO(3)_{\mu_0}^*$, is the linear space $T_{(\mu_0+\nu)} {\mathcal O}_{(\mu_0+\nu)}.$ 
It is useful to recall  that in slice coordinates the Marsden-Weinstein reduced spaces at $\left(R ,\nu, \eta \right)$, which are $J^{-1}\left(R(\mu_0 +\nu ) \right)/SO(3)_{R(\mu_0+\nu)}^*$, are
all isomorphic to the linear space $T_{\mu_0} {\mathcal O}_{\mu_0}\cong so(3)_{\mu_0}^\perp.$ 
The symplectic leaves ${\mathcal O}_{\mu_0+\nu} = S^2(\|\mu_0+\nu\|)$ of $so(3)^*$ are modelled (locally) as canonical linear spaces, and ``indexed" by $\nu.$

\medskip
Consider now a   Hamiltonian $\tilde H( S, \mu)$ on $SO(3) \times so(3)^*$. Applying the change of coordinates  given by $\Phi,$  we have: $H  (R, \nu, \eta) : =( \tilde H \circ \Phi)( R, \nu, \eta)$
and the equations of  motion become
\begin{align}\label{E:eqsmotion}
\left[
\begin{array}{cccccc}
\xi_x\\
\xi_y\\
\,\\
\xi_z\\
\dot \nu\\
\,\\
\dot \eta_x\\
\dot \eta_y
\end{array}
\right] = 
\left[
\begin{array}{cccccccc}
0 & \frac{1}{\mu_0+\nu} & \, & 0 & 0  &\,  & 0 & 0\\
-\frac{1}{\mu_0+\nu} & 0  & \,& 0 & 0 &\, & 0 & 0\\
\,\\
0 & 0 & \,  & \,\,0 & 1 & \, &0&0\\
0 & 0 &\,& -1 & 0 &\,  &0&0\\
\,\\
0 & 0 &\,& 0 & 0 & \, & 0&- \frac{1}{\mu_0}\\
0 & 0 & \,& 0 & 0 \,& &\frac{1}{\mu_0}&0
\end{array}
\right] 
\left[
\begin{array}{cccccc}
\left( R^{-1}\partial_R H \right)_x\\
\left( R^{-1}\partial_R H \right)_y\\
\,\\
\left( R^{-1}\partial_R H \right)_z\\
\partial_\nu H\\
\,\\
\partial_{\eta_x} H\\
\partial_{\eta_y} H
\end{array}
\right]
\end{align}
where $\xi= R^{-1}\dot R \in so(3).$
In particular, if $H$ is $SO(3)$-invariant and  $h(\nu, \eta) := H(\cdot , \nu, \eta) $, we have
\begin{align}
\dot \nu &=0 \label{nu_const} \\
\dot \eta &= -\frac{1}{\mu_0} \mathbb{J} \,\nabla_\eta h
\label{zeta_eq}
\end{align}
with reconstruction equations:
\begin{align}\label{can_recons_1}
R(t)^{-1} \dot R(t) = \xi(t)  = \,\left( 
\begin{array}{ccc}
0\\
0\\
\frac{\partial h}{\partial \nu}\Big|_{(\nu(t), \eta(t))}
\end{array}
\right).
\end{align}

Note that this reconstructs $R(t)$, not the body's attitude $S(t)$.
At a given time $t_1$, once $\nu(t_1), \eta(t_1)$ and $R(t_1)$ have been calculated by integrating
\eqref{nu_const}, \eqref{zeta_eq} and \eqref{can_recons_1}, the attitude can be computed simply as 
\[
S(t_1) = R(t_1) F(\nu(t_1), \eta(t_1))^{-1}.
\]

\vspace{0.3cm}
A \textit{relative equilibrium} is a steady motion in a group direction. 
In the original left-trivialised coordinates $(S,\mu)$, a relative equilibrium with velocity $\xi_0$ is
a trajectory of 
the form $S(t) = \exp(t\xi_0) S_0$ with $\mu$ constant.
In slice coordinates, $\mu$ constant is equivalent to $\nu$ and $\eta$ constant,
% and $F(\nu(t), \eta(t))$ constant, 
and in this case
$S(t) = R(t) F^{-1}(\nu_0, \eta_0)$ implies $R(t) = \exp(t\xi_0) R_0$.
Thus in slice coordinates, a relative equilibrium with velocity $\xi_0$ is a trajectory 
in which $\eta$ is an equilibrium of \eqref{zeta_eq}
and the velocity $\xi(t)$ given by \eqref{can_recons_1} has the constant value $\xi_0$.
%$\mu(t) = F(\nu(t), \eta(t) (\mu_0 + \nu)$, so  
%It is immediate that 
%equilibria of \eqref{zeta_eq} are relative equilibria with constant velocity given by \eqref{can_recons_1}.

Since $\nu= const.= \nu_0$, the reduced Hamiltonian depends dynamically on $\eta$ only, whereas $\nu_0$ affects the motion as  an external parameter.
Thus $h(\eta; \nu_0)$ is a one degree of freedom canonical system on a symplectic vector space. 
The phase curves for \eqref{zeta_eq}  fill in the
$so(3)_{\mu_0}^{\perp}$-phase plane as level sets of the energy integral $h(\eta; \nu_0)= const.$ In particular,  any  $SO(3)$-invariant system on $T^*SO(3)$ is integrable. 

  Note that the reconstruction equation \eqref{can_recons_1} reduces to reconstruction on the Abelian group $SO(3)_{\mu_0}$ and it leads to rotations about the $z$-axis. Specifically, if $\eta(t)$ is a solution for \eqref{zeta_eq}, then
  \begin{equation}
  \label{recon_Ab}
  R(t) =  
  \left[
  \begin{array}{ccc}
  \cos \theta(t) & -\sin \theta(t) &0\\
  \sin\theta(t) & \,\,\,\,\,\,\cos\theta(t) & 0\\
  0&0&1
  \end{array}
  \right]
  \end{equation}
where
  \begin{align*}
  \label{recon_eta}
\theta(t) =  \frac{\partial h}{\partial \nu}\Big|_{\left(\nu_0, \eta(t)\right)}\,.\end{align*}

\subsection{The Euler-Poinsot rigid body}\label{ssect:freerigid}

The Hamiltonian of the Euler-Poinsot (free) rigid body is (see, for instance,  \cite{MR99}):
\[h(\mu_1, \mu_2, \mu_3)= \frac{1}{2} \left(
\frac{\mu_1}{\mathbb{I}_1}+ \frac{\mu_2}{\mathbb{I}_2}+\frac{\mu_3}{\mathbb{I}_3}
\right)
\]
where $\mathbb{I}_i$ are the principal moments of inertia. 
 Using the formulae \eqref{mu_1}--\eqref{mu_3}  the Hamiltonian $h$ reads:

\begin{align}
h(\nu, \eta):= \frac{1}{2} (\mu_0+\nu)
&\left[
\mu_0 \left( 1- \frac{\mu_0}{4(\mu_0 + \nu)} \left(\eta_x^2+\eta_y^2  \right) \right) \left( \frac{\eta_x^2}{\mathbb{I}_2}+ \frac{\eta_y^2} {\mathbb{I}_1}  \right)
 \right. \nonumber\\
&\,\,\, \quad \left. +   \left( 1- \frac{\mu_0}{2(\mu_0 + \nu)} \left(\eta_x^2+\eta_y^2  \right) \right)^2\frac{(\mu_0+\nu)}{\mathbb{I}_3}
    \right]
\end{align}
One may deduce easily the  stability criteria, as well as sketch the Marsden-Weinstein reduced phase-space at any momentum $(\mu_0+\nu).$
The super-integrability of the Euler-Poinsot rigid body (that is  the case when $\mathbb{I}_1=\mathbb{I}_2$)  is transparent, as $h$ becomes a function of $|\eta|^2$ only.
%(depending on the magnitudes of $\mathbb{I}_i'$s)

\begin{figure}
    \mbox{
     % \subfigure[]{\resizebox{!}{4.2cm}{\includegraphics{C>0-and-D<0.jpg}} \label{triangle}} \quad
      %\subfigure[]{\resizebox{!}{4.2cm}{\includegraphics{C<0-and-D>0.jpg}} \label{triangle1}}} 
      % \subfigure[3-d view]
      { 
           \includegraphics[scale=0.41] {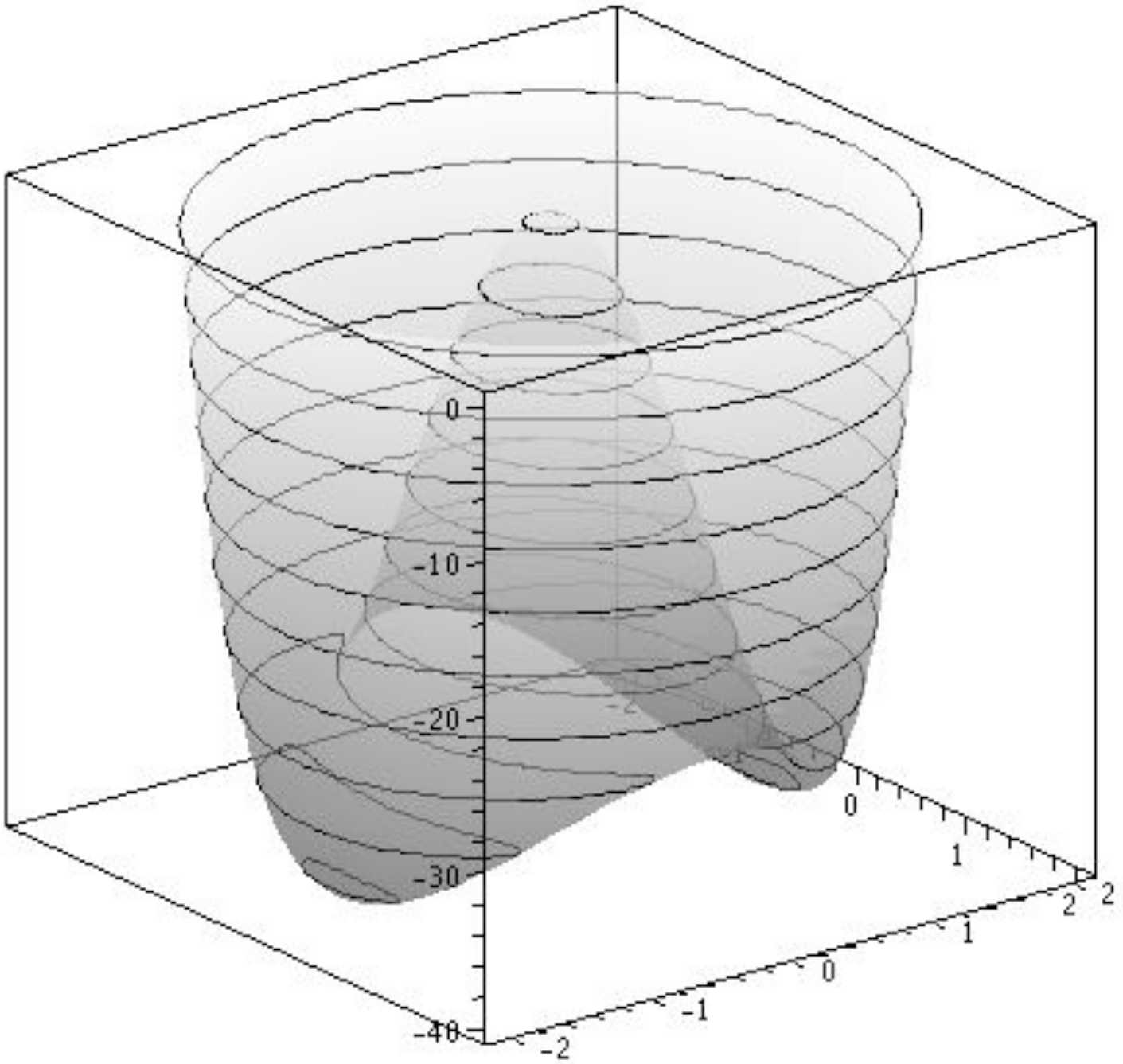} 
           }    \quad \quad \quad 
     % \subfigure[top view]
      { 
           \includegraphics[scale=0.41] {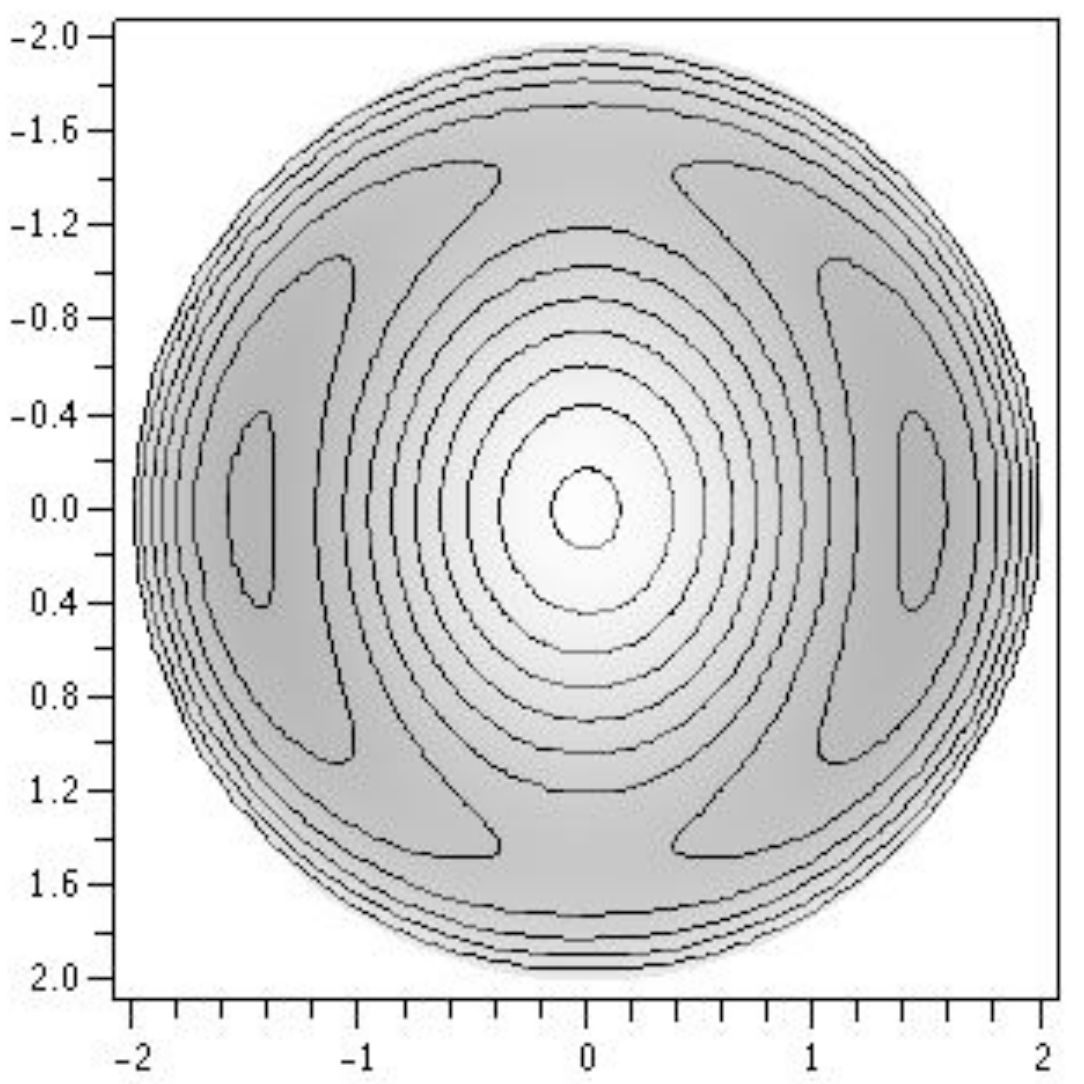} 
           } 
           }
              \caption{The Marsden-Weinstein reduced phase-space for the free rigid body in slice coordinates. By \eqref{zeta_eq}, we have $\nu=\nu_0= const.$ The phase curves are retrieved as the level sets of the Hamiltonian $h(\eta_x, \eta_y; \nu_0)$. Left: 3-d view. Right: top view.}
   % \caption{ (a) The flow for $\alpha=1$  and $\ep=0.1$ on a negative energy surface}
    %(b) The flow for $\alpha=1$ and  $\ep=0$ on a negative energy surface.}
  %\end{center}
\end{figure}

Is is known that  the rigid body  accepts a canonical action-angle description as given by Serret-Deprit-Andoyer coordinates. A comprehensive description of these coordinates and their generalisation to \textit{regularised} coordinate charts  which cover the co-adjoint sphere minus the antipodal point of the relative equilibrium $(0, 0, \mu_0)$ can be found in \cite{Fasso_4} (Section 4) and the references therein. A direct comparison of the slice  and the \textit{regularised} Serret-Deprit-Andoyer coordinates shows that they provide \textit{identical} parametrisations of $T_{\mu_0}{\mathcal{O}}_{\mu_0} \equiv so(3)_{\mu_0}^\perp.$  
Specifically, $(\eta_x, \eta_y)$ are in fact regularised Serret-Andoyer-Deprit
coordinates.
The slice parameterisation uses a global attitude description R(t). If an explicit local coordinate system
is sought, Serret-Deprit-Andoyer is probably the best choice,
because they are action-angle coordinates with a very simple relation to Euler angles.  
The relationship between the two parametrisations will be discussed in detail in future work.
%See also Section \ref{ssect:Nehkoroshev}.
\iffalse 
We note here that the regularised Serret-Andoyer-Deprit
coordinates were used by Benettin, Fass\`o and collaborators for describing the reduced space of
the Euler-Poinsot rigid body, in their study of
its long term stability near a relative equilibrium
under small perturbations
 \cite{Fasso_4}.
\fi

\subsection{Cotangent-bundle rotationally invariant systems}\label{ssect:CBrot}

Consider a $SO(3)$-invariant Hamiltonian system $H: T^*Q \to \mathbb{R}$ and let $(q_0, p_0) \in T^*Q$ be a point on a relative equilibrium with group velocity $\xi$ and momentum $\mu_0.$   We apply now Proposition \ref{prop_gen_tube} where $(q_0, p_0)$ is  the base point of the Palais tube.
It follows that in a neighbourhood  of $(q_0, p_0)$ the phase space is symplectomorphic to $SO(3) \times so(3)_{\mu_0}^* \times so(3)_{\mu_0}^\perp \times T^*S$, where we identified $T_{\mu_0}{\mathcal{O}}_{\mu_0} \equiv so(3)_{\mu_0}^\perp.$ Since the Hamiltonian is $SO(3)$ invariant, in slice coordinates 
\begin{equation}
\label{slice_coord}
(R, \nu, \eta, s, \sigma ) \in SO(3) \times so(3)_{\mu_0}^* \times so(3)_{\mu_0}^\perp \times T^*S
\end{equation}
 it can be written as $h=h(\nu, \eta, s, \sigma).$
The equations of motion take the form
\begin{align}
 \dot \nu&= 0  \label{can_system_1}\\
 \dot \eta &= -\frac{1}{\mu_0} \,  \mathbb{J} \, \partial _{\eta}h\,,\quad \quad 
 \left( 
\begin{array}{c}
\dot s\\
\,\\
\dot \sigma
\end{array}
\right)
=
\mathbb{J}
\left( \begin{array}{c}
\frac{\partial h }
{\partial s}  \\
\,\\
\frac{\partial h }
{\partial \sigma} 
\end{array}
\right) \label{can_system}
\end{align}
whereas the reconstruction equation is
\begin{align}
\dot R &= R 
\left( 
\begin{array}{ccc}
0\\
0\\
\frac{\partial h}{\partial \nu}\,.
\end{array}
\right) \label{can_recons}
\end{align}

The reconstruction equation can be integrated to give rotations about the $z$ axis by angle:
  \begin{equation}
  \label{recon_eta}
\theta(t_0) = 
\int_0^{t_0} \frac{\partial h}{\partial \nu}\Big|_{\left(\nu_0, \eta(t), s(t), \sigma(t) \right)}\,dt.
\end{equation}

\medskip

We consider relative equilibria at
$(\nu,\eta,s,\sigma)= (\nu_0, 0,0,0)$, with velocity 
\begin{align} \label{E:releqvel}
\xi_0 := 
\left( 
\begin{array}{ccc}
0\\
0\\
\frac{\partial h}{\partial \nu}\Big|_{(\nu_0, 0,0,0)}
\end{array}
\right).
\end{align}

By construction 
%the relative equilibrium $\left( (0,0,\mu_0), s_0, \sigma_0  \right)$ corresponds $(\nu, \eta, s, \sigma)=(0,0,0,0)$ ??? More importantly,   
  the Marsden-Weinstein reduced space at $\mu_0$ is locally symplectomorphic to the   canonical vector space  $so(3)_{\mu_0}^\perp \times T^*S$, and the dynamics are given by the reduced Hamiltonian $h_{\mu_0}(\eta, s, \sigma):= h(0, \eta, s, \sigma).$ In this model of the reduced space, the relative equilibrium $q_0$ becomes the origin.

\medskip
Recall that a simple mechanical system is a system with a Hamiltonian  $H: T^*Q \to \mathbb{R}$ of the form 
\begin{align}\label{E:simplemech}
H(q, p_q) =  \frac{1}{2}  {\mathbb{K}}^{-1} (p_q, p_q) + V(q)
\end{align}
for some $G$-invariant Riemannian metric $\mathbb{K}$, and some $G$-invariant 
potential $V: Q \to \mathbb{R}.$ We assume that $G$ acts properly. The dynamics on $T^*Q$  may be specialised easily this case. We take  $Q$ a finite dimensional vector space which, 
without loss of generality, we consider to be an open subset of $ \mathbb{R}^n.$

Fix $q_0 \in Q$ and let $N$ be the orthogonal complement to the group orbit through $q_0$.
By Palais' slice theorem, there is a neighbourhood $S$ of $0\in N$ such that the 
map $\tau: SO(3) \times S \to Q$, $(R, s) \mapsto R(q_0 + s)$ is a diffeomorphism onto its image.
The cotangent lift of $\tau$ defines local coordinates 
$(R,\xi, s, \dot s)$ on $TQ$,
$(R,\mu, s, \sigma)$ on $T^*Q$,
as explained in Section \ref{ssect:arbitrary}.
Since the Riemannian metric $\mathbb{K}$ and the potential $V$ are both $SO(3)$-invariant, 
they are independent of $R$ in these coordinates.
We write $\mathbb{K}(s)$ in block form in coordinates $(\xi, \dot s)$ as follows
(this defines $\mathbb{I}, \mathbb{C}$ and $m$):
\[
\mathbb{K}(s) = \left[\begin{array}{cc}
\mathbb{I}(s) &\,\,\, \mathbb{C}(s) \\
\,\,\,\mathbb{C}^T (s) &\,\,\, m(s)
\end{array}
\right]
\]
and define
$\mathbb{A}:=\mathbb{I}^{-1}\mathbb{C}$ and $\mathbb{M}:= m -
\mathbb{C}^T\mathbb{I}^{-1} \mathbb{C}$.
With these definitions, it can be shown  that 
the Hamiltonian takes the following form, 
 \begin{equation}h(\mu, s, \sigma)={\frac{1}{2}}\,
\mu^T {\mathbb{I}^{-1}}\, \mu + {\frac{1}{2}}
(\sigma-{\mathbb{A}}^{T}\mu )^T
{\mathbb{M}}^{-1}(\sigma-{\mathbb{A}}^{T}\mu) +V(s).
\label{H-bar-sms}\end{equation}
This is a special case, for free actions, of a more general result in \cite[Section 6]{RSS06}. In the case of $N$-body problems (molecules), the corresponding  Hamiltonian is deduced in \cite{MoRo99} and \cite{Ciftci12}. The relative equilibria conditions are \cite{RSS06}
\begin{align*}
&\sigma=  {\mathbb{A}}^{T}(s)\mu\\
&\mu \times \left( {\mathbb{I}^{-1}}(s)\, \mu \right)=0\\
&\frac{\partial}{\partial s}\left( V(s) + \mu^T {\mathbb{I}^{-1}}(s)\, \mu\right)=0\,.
\end{align*}

Assume that $\left((0,0,\mu_0), s_0, \sigma_0 \right)$ is a relative equilibrium, as determined by the
above equations, and let $(q_0, p_0) \simeq \left(Id, (0,0,\mu_0), s_0, \sigma_0 \right)$.  Now we  
express the Hamiltonian \eqref{H-bar-sms} in the slice coordinates given by \eqref{mu_1} -- \eqref{mu_3}, obtaining
\begin{align*}
h: so(3)_{\mu_0} \times so(3)_{\mu_0}^\perp \times T^*S &\to \mathbb{R}\,,\quad 
h= h(\nu,\eta, s, \sigma)\,. 
\end{align*}
The equations of motion are given by \eqref{can_system_1} -- \eqref{can_recons}.
%
%
%
%
%\section{Birkhoff-Poincar\'e Normal forms near a relative equilibrium}

\subsection{A conjecture on the Nekhoroshev's estimates near a relative equilibrium}
\label{ssect:Nehkoroshev}

Recall  that in \cite{Nek77} Nekhoroshev showed that under a perturbation of order $\varepsilon$, the actions of an arbitrary orbit of a quasi-convex  integrable Hamiltonian vary at order $\varepsilon^b$ over  a time interval of order $\text{exp}(\varepsilon^{-a}),$ where   $a$ and $b$ are positive numbers which depend on the number of degrees of freedom and the steepness of the Hamiltonian. For Hamiltonians near an elliptic equilibrium, under certain hypothesis, analogous estimates are found  by \cite{Fasso98}, \cite{Poschel99} and \cite{Niederman98}. Specifically, (under the right conditions) the actions $ I = (I_1, I_2, \ldots, I_n)$ of a  Hamiltonian system near an elliptic equilibrium fulfill 
\[ \left|  I(t) -  I(0) \right| < C\varepsilon^a\quad \quad \text{for}\,\,\, |t|< D_1 \text{exp}(D_2 \varepsilon^{-a})  \]
where $a, C,  D_1, D_2$ are constants independent of $\varepsilon$. 
%is a constant depending on $n$ (the number of degrees of freedom), 
%and $D_1, D_2$ are constants depending on the Hamiltonian and $n$.

\medskip
%The  Nekhoroshev  theory  for symplectic Lie symmetric systems is studied in an   abstract framework in \cite{Blaom01}. 
 The  Nekhoroshev long term stability of the perturbed Euler-Poinsot rigid-body  near a relative equilibrium (proper rotation)  was treated in  a series of excellent papers by Benettin, Fass\`o et al. (see \cite{Fasso_4} and references therein).  In these papers, the authors show that for a perturbed rigid body,  the proper rotations around the symmetry axis are Nekhoroshev stable. 
 %More precisely, they prove that if the initial datum is sufficiently close to a proper rotation, then for a very long time it remains such, and the tip of the unit vector $\mu$ parallel to the angular momentum precesses, up to a small noise, along the level curves of a regular function on the unit sphere.
%The authors showed that  a rigid body where the body rotates about the axes of inertia with minimal or maximal moment of inertia

Recall from Section \ref{ssect:freerigid} that our slice coordinates for the reduced space correspond to the regularised 
Serret-Andoyer-Deprit coordinates used by 
%We note here that  $(\eta_x, \eta_y)$ are in fact the coordinates used by 
Benettin, Fass\`o et al.
This suggests that our slice coordinates for general 
rotationally-invariant cotangent-bundle systems may be useful for addressing the conjecture
that in all such systems,
the (non-linearly) stable relative equilibria  are Nekhoroshev long term stable.

\section{Birkhoff-Poincar\'e normal forms near a relative equilibrium}\label{sect:normalforms}

\subsection{Rotationally invariant cotangent bundle systems}

Consider a canonical symplectic manifold $(P, \Omega_c)$, a Hamiltonian $H: P \to \mathbb{R}$ and $z_0$ an equilibrium of the dynamics induced by $H$. Denote by 
 $\hat H^{(i)}$ the homogeneous polynomial of degree $i$ as obtained from the Taylor expansion of $H$ around $z_0.$
  The truncated normal form of order $k$ is defined as   the $k$-jet of the  Hamiltonian written in some  (new) coordinates  $\hat H$
 \[
 j^k\hat H = \hat H^{(2)} +  \hat H^{(3)} +\ldots + \hat H^{(k)}
 \]
which fulfills 
\[
\left\{  
\hat H^{(2)} \,, \hat H^{(i)}\right\} =0 \quad \quad \text{for all} \,\,\, i=2,3,\ldots k
\]
The method itself consists in obtaining the property above  by applying iteratively changes of coordinates as given by  the time-1 Hamiltonian flow $X^1_F$ where $F$ is a homogeneous polynomial of degree $k$ found by solving the \textit{homological} equation
\[
\hat H^{(k)} + \left\{  
\hat H^{(2)} \,, F 
\right\}=0
\]
A detailed exposition of Hamiltonian normal forms can be found in \cite{Broer09} (see also \cite{MoRa05}). 

\medskip
The classical  method of Birkhoff-Poincar\'e normal forms near an equilibrium can now be applied to the study of dynamics  near  relative equilibria in the reduced space in the case of rotationally invariant systems.  Indeed, since a relative equilibrium is an equilibrium in the reduced space, and since the reduced space is endowed with a canonical form, one may immediately apply the standard theory.

\medskip
We will not  report here on the importance and usefulness  of  normal forms in relation, for instance,  to bifurcation and stability theory (the interested reader may consult, for instance, \cite{Broer09} and references therein, as well as \cite{M92}). In the context of cotangent bundles systems with $SO(3)$ symmetry, very recent applications can be found in  \cite{Ciftci12} and \cite{Ciftci14}; here, since the normal forms are calculated  directly on the reduced space, there is no need of a canonical  embedding in the full space.  
%Rather we will comment on a possible generalisation of our theory in Section 4.

\subsection{The general case}

All of the theory in  Section \ref{Normal_forms_SO(3)} can be generalised to arbitrary Lie groups, i.e., to
proper, cotangent-lifted free actions of any $G$ on $T^*G$.
A key difference is that we have no general formula for the symplectic tube $\Phi$, and do not expect to find one. Thus we do not expect to be able to write $H$ explicitly in slice coordinates
(though this might be possible in special cases).
Nonetheless, the equations of motion in the slice have almost the same form as in 
\eqref{nu_const} and \eqref{zeta_eq}. If $G_\mu$ is compact, the equations of motion are
\begin{align}\label{general_sliceeq}
\dot \nu &= \ad\nolimits^*_{\frac{\partial h}{\partial \nu}} \nu,  \\
\dot \eta &= -\frac{1}{\mu_0} \mathbb{J} \,\nabla_\eta h,
\label{general_zetaeq}
\end{align}
with the first equation reducing to $\dot \nu = 0$ whenever $G_\mu$ is abelian.
The reconstruction equation takes the same form as before:
\begin{align}\label{general_recons}
R(t)^{-1} \dot R(t) = \frac{\partial h}{\partial \nu}.
\end{align}
The case of non-compact $G_\mu$ is dealt with in \cite{RWL02}.
For simple mechanical systems, the relative equilibrium conditions given in Section \ref{ssect:CBrot}
have a generalisation in \cite{RSS06}.

%A key observation is that, when working in concrete applications,  what one needs is a \textit{truncated} series expansion of $H$ at the equilibrium rather than the  Hamiltonian  formula.

%The Birkhoff-Poincar\'e normal forms can be computed in slice coordinates for general $G$.
It is clear that if an explicit  formula for the symplectic tube $\Phi: G \times \mathfrak{g}_{\mu} \times N \to G \times \mathfrak{g}^*$ exists, then this can be composed with the original Hamiltonian to express it in slice coordinates, and this Hamiltonian can then be differentiated as needed.
% the theory presented for $G=SO(3)$ (Section \ref{Normal_forms_SO(3)}) can be immediately generalised to $G$. 
However, %as observed above, 
a key observation is that,
to obtain a truncated normal form of order $k$, it is not necessary to have an explicit formula for $\hat H$;
all that is required is its truncated series expansion. 
In particular, to obtain such a truncation in slice coordinates one needs only the derivatives of $\Phi$ at $(e,0,0)$ up to order $k$. 
%(Thanks to Mark Roberts for pointing this out.)
These can be obtained using the Tube Condition in Proposition \ref{tubecond}, which for reader's convenience we re-write:

%\label{lem:ansatz}
%\textit{Suppose $\displaystyle F(\nu, \eta) = \exp\left(h(\nu,\eta) \,  \frac{\eta}{\|\eta\|}\right)$, for some $h: \lieg_{\mu}^* \times \lieg_\mu^\perp \to \R$.  Then Condition \eqref{E:tubecond} is automatically satisfied (regardless of the definition of $h$) for all  $\left(\dot \nu_1, \zeta_1\right), \left(\dot \nu_2, \zeta_2\right)$ such that $\zeta_1$ and $\zeta_2$ are parallel to $\eta$.}

\medskip
\noindent
\textit{
$\Phi^* \Omega_c = \Omega_Y$ if and only if
\begin{align*}
\Phi(g, \nu, \ad\nolimits_\eta^* \mu) 
= \left(
g F(\nu, \eta)^{-1}, 
\Ad\nolimits_{F(\nu, \eta)^{-1}}^*
\left(\mu + \nu\right)
\right)
\end{align*}
for some $F:\mathfrak{g}_\mu^* \times \mathfrak{g}_\mu^\perp \to G$ such that $F(0,0) = e$ and,
for all $\nu, \eta, \nu_i,\zeta_i$,
\begin{align*}
&\left\langle 
\mu + \nu, 
\left[ 
F(\nu, \eta)^{-1} \left(DF(\nu, \eta) \cdot \left(\dot \nu_1, \zeta_1\right)\right),
F(\nu, \eta)^{-1} \left(DF(\nu, \eta) \cdot \left(\dot \nu_2, \zeta_2\right)\right)
\right]
\right\rangle \\
&+
\left\langle
\dot \nu_2, F(\nu, \eta)^{-1} \left(DF(\nu, \eta) \cdot \left(\dot \nu_1, \zeta_1\right)\right)
\right\rangle
- \left\langle
\dot \nu_1, F(\nu, \eta)^{-1} \left(DF(\nu, \eta) \cdot \left(\dot \nu_2, \zeta_2\right)\right)
\right\rangle \notag \\
&= \left\langle \mu, \left[\zeta_1, \zeta_2\right]\right\rangle. \notag
\end{align*}
}
For any specific matrix Lie group $G$,
this condition can be solved directly for $DF(0,0)$, while implicit differentiation of the 
same condition allows the \textit{iterative} calculation of higher derivatives to the desired order.  
Note that there may not be unique solutions to these equations, since the symplectic tube
is in general not unique. Different choices of solutions will lead to different normal forms, all valid.

Note that the Lie symmetry group need not be compact, either in the Tube Condition in Proposition \ref{tubecond} or in Lemma \ref{lem:ansatz}. In particular, one can apply the methodology outlined here for $G=SE(3)$ for which an interesting  case study is given by  the so-called full two body body problem, that is, two spatially extended bodies, (two asteroids), in gravitational interaction. We intend to investigate such problems in the future.

\section{Relationship to Reduced Energy Momentum method}
\label{sect:REM}

We consider the relationship between the symplectic slice coordinates studied here and the Reduced
Energy Momentum (REM) method \cite{SLM91, M92}.
The general Energy-Momentum Method  \cite{Pat92} concerns a relative equilibrium $z_e$,
with velocity $\xi_e$, of 
a $G$-symmetric Hamiltonian system. The method gives sufficient conditions for
for a kind of equivariant nonlinear stability called $G_{\mu_e}$-stability, where $\mu_e = J(z_e)$.
The main condition is that the \textit{augmented Hamiltonian} defined by
$H_{\xi_e}(z) = H(z)  - \left<J(z), \xi_e \right>$ be definite on some (and hence any) 
subspace $\mathcal{S}$ of $\ker dJ(z_e)$ 
that is %transverse to the $G_\mu$ orbit through $z_e$, i.e. 
transverse to $\mathfrak{g}_{\mu_e} z_e$.

Consider a simple mechanical system on $T^*Q$, with Hamiltonian as in (\ref{E:simplemech})
and $G$ acting properly, with relative equilibrium $z_e = (q_e, p_e)$.
The REM reduces the main condition of the Energy-Momentum method to two simple tests of definiteness 
on subspaces of $T_{q_e}Q$. 
This provides a computationally cheap way to prove nonlinear stability in some cases.
The proof of the REM relies on a particular choice of the subspace $\mathcal{S}$ mentioned above,
and a particular splitting of that subspace that block-diagonalises $d^2H_{\xi_e}$ .
We compute some of these spaces in coordinates given the Palais slice theorem for the action of $G$ on $Q$.
Let $S$ be a slice in $Q$ at $q_e$ with respect to the given metric.
Without loss of generality we assume S is a vector space,
so that
% by the Palais slice theorem,
\begin{align*}
T^*Q \cong T^*(G\times S) &\cong \mathfrak{g} \oplus \mathfrak{g}^* \oplus S \oplus S^*,
\end{align*}
where we use left-trivialisation on $T^*G$.
The relative equilibrium in the new coordinates is $z_e = (e,\mu,0,0)$.
%We will compare the REM and the Hamiltonian Slice Theorem in these coordinates.
%We compare the splittings at $z_e$ that arise from the REM on one hand and the 
%Hamiltonian Slice Theorem on the other.
%Hamiltonian Slice Theorem on the other.
Restricting the Riemannian metric at $q_e = (e,0)$ to the subspace $\mathfrak{g} \oplus \{0\}$ of 
$T_e(G \times S) \cong \mathfrak{g} \oplus S$
gives an inner product on $\mathfrak{g}$, 
with respect to which we take the complement $\mathfrak{g}^\perp$.
\footnote{This inner product need not be invariant with respect to the
adjoint action of $G_\mu$ on $\mathfrak{g}$.
One of the conditions of the Energy-Momentum Method is that 
$\mathfrak{g}$ admits a $G_{\mu_e}$-invariant inner product.} 
We calculate the spaces $\mathcal{V}\subset T_{(e,0)} Q$ and 
$\mathcal{S}\subset \ker dJ(z_e)$ in the Reduced Energy-Momentum method
as presented in \cite{M92}:
%which is a particular subspace of $\ker dJ(z_e)$ transverse to the group orbit $G_\mu \cdot z_e$:
\begin{align*}
J(g,\nu, s, \sigma) &= \Ad\nolimits_{g^{-1}}^* \nu, \\
\ker dJ(e,\mu, 0, 0) &= \{(\eta, \ad\nolimits_\eta^* \mu) : \eta \in \mathfrak{g}\} 
\oplus S \oplus S^*,\\
T_{z_e}(G_\mu z_e) = \mathfrak{g}_\mu z_e &= \mathfrak{g}_\mu \oplus \{0\} \oplus \{0\} \oplus \{0\}, \\
\mathcal{V} &:= \left(\mathfrak{g}_\mu (e,0)\right)^\perp
= \left(\mathfrak{g}_\mu \oplus \{0\}\right)^\perp = \mathfrak{g}_\mu^\perp \oplus S, 
\end{align*}
where the last equality is due to the definition of the inner product on $\mathfrak{g}$, and
\begin{align*}
\mathcal{S} &:=
\{\delta z \in \ker DJ(z_e) : T\pi_Q \cdot \delta z \in \mathcal{V}\} \\
&= \{(\eta, \ad\nolimits_\eta^* \mu) : \eta \in \mathfrak{g}_\mu^\perp\} \oplus S \oplus S^* \\
& = N_1 \oplus S \oplus S^*,
\end{align*}
where $N_1$ is as in \eqref{E:N1}. 
Since $\mathcal{S}$ is a complement to $\mathfrak{g}_\mu z_e$ in $\ker DJ(z_e)$, it is
a realisation of the symplectic normal space $N_s$, and in fact it is the 
same as the realisation that appears in the constructive symplectic slice theorem in Section \ref{ssect:arbitrary}
(recall that $N_1 \cong T_\mu\mathcal{O}_\mu \cong\mathfrak{g}_\mu^\perp$). 
The REM splits $\mathcal{S}$ further:
\begin{align*}
\mathcal{S} = \mathcal{S}_{RIG} \oplus \mathcal{W}_{INT} \oplus \mathcal{W}^*_{INT}.
\end{align*}
We will not fully calculate these spaces here, but the following can easily be checked:
\begin{align}\label{E:REMsplit}
\mathcal{S}_{RIG} &=  N_1 \oplus \{0\} \oplus \{0\},\\
\mathcal{W}_{INT} &\le \mathfrak{g}_\mu^\perp \oplus \mathfrak{g}^* \oplus S \oplus \{0\}, \notag\\
\mathcal{W}_{INT}^* &= \{0\} \oplus \{0\} \oplus \{0\} \oplus S^*. \notag
\end{align}
The REM works in part because this splitting of $\mathcal{S}$ block-diagonalises 
the augmented Hamiltonian.

In contrast, the Hamiltonian Slice Theorem 
block-diagonalises the \textit{symplectic form},
and it does so at every point $z$, not just $z_e$.
%,and on the entire tangent space $T_{z_e}\left(T^*Q\right)$, not just a subspace.
In symplectic slice coordinates,
%on $\mathfrak{g} \oplus \mathfrak{g}_\mu^* \oplus N_s$,
the symplectic form
% is $\Omega_Y := \Omega_{T}+\Omega_{0}+\Omega_{N_{s}}$
%as defined in the introduction, 
%where $\Omega_{N_s}$ is the reduced symplectic form
%defined on the symplectic normal space, and
%$\Omega_Y$ 
block-diagonalises 
with respect to the two-way splitting $\left(\mathfrak{g} \oplus \mathfrak{g}_\mu^*\right) \oplus N_s$,
where 
%Our constructive slice theorem realises $N_s$ as 
\begin{align}\label{E:CBSsplit}
N_s \cong  N_1 \oplus S \oplus S^* \cong T_\mu\mathcal{O}_\mu  \oplus S \oplus S^*
\end{align}
and $\Omega_{N_s}$ has the following form with respect to this
splitting: 
\begin{align}\label{E:CBSomegaNs}
\left[\begin{array}{ccc}
\Omega_{KKS} & 0 & 0 \\
0 & 0 & I \\
0 & -I & 0
\end{array}\right].
\end{align}
Thus the total symplectic form block-diagonalises with respect to the 
the 3-way splitting $\left(\mathfrak{g} \oplus \mathfrak{g}_\mu^*\right) \oplus N_1 \oplus \left(S \oplus S^*\right)$.

\iffalse
The further development of the REM depends on the \textit{locked inertia tensor} $\mathbb{I}$,
the mechanical connection $1$-form $\alpha_\mu$, and derivatives of them, 
which we now calculate in the present coordinates.
\begin{align*}
\mathbb{I}(g,s)(\xi, \eta) &:= \left<\left< \xi (g,s), \eta (g,s) \right> \right> 
= \langle\langle \Ad\nolimits_{g^{-1}} \xi,\Ad\nolimits_{g^{-1}} \eta \rangle \rangle \\
\left<D\mathbb{I}(0,\mu), (\zeta, \delta s) \right> (\xi, \eta) &:=
\langle\langle \xi, -\ad\nolimits_\zeta \eta \rangle \rangle
+\langle\langle \eta, -\ad\nolimits_\zeta \xi \rangle \rangle
\end{align*}
\fi

The REM and the constructive Hamiltonian Slice Theorem
both make use of the same realisation of the symplectic normal space, 
$\mathcal{S} = N_1 \oplus S \oplus S^*$, but
while the slice theorem uses the canonical 3-way splitting
$N_1 \oplus S \oplus S^*$,
the REM uses the splitting in \eqref{E:REMsplit}.
The two splittings do share one common subspace, $\{0\}\oplus \{0\} \oplus \{0\} \oplus S^*$, however there the similarities end. 
The splitting in the REM is chosen to block-diagonalise the augmented Hamiltonian,
leading to a stability condition defined directly on configuration space.
The splitting in the constructive Hamiltonian Slice Theorem
puts the symplectic form into block form, but not the augmented Hamiltonian,
and is not associated with a convenient condition for nonlinear stability.

For the specific purpose of proving stability of a relative equilibrium of a simple mechanical system,
the REM is a superb tool. 
%However, it applies only to this specific problem,
%and it provides only a sufficient (not necessary) condition.
%Symplectic slice coordinates are more general, as summarised in the next section.
Symplectic slice coordinates are general-purpose symmetry-adapted coordinates
on phase space
that block-diagonalise the symplectic form at every $z$,
leading to a normal form for the Hamiltonian equations given in
\eqref{general_sliceeq}, \eqref{general_zetaeq} and \eqref{general_recons}.
The simple form of these equations, and the fact that \eqref{general_zetaeq} is
the reduced Hamiltonian system, make these coordinates ideal for 
computing Birkhoff-Poincar\'e normal forms.

\section{Acknowledgements}

\medskip
CS was supported by an NSERC Discovery grant. This work was completed during a  research stay at the Otter Lake Science Institute in Ontario. Also, we  thank the referee for many useful comments. 
%We wrote this paper with the hope that it will be used for interesting and meaningful applications. Needless to say, we wished Jerry was around to say what he thinks of it...

\section{Appendix}\label{appendix}

This appendix contains proofs of three results in the main text.
The main result is Proposition \ref{tubecond} (the ``Tube Condition'')
in Section \ref{ssect:cotbunslice}, which gives a necessary and sufficient condition for a
map $\Phi$ from $G \times \mathfrak{g}_\mu^* \times N_s$ to $G\times \lieg^*$
to be symplectic.
This proposition is used in Section \ref{ssect:sliceSO3} to construct an explicit symplectic tube
when $G=SO(3)$, and it is also a key ingredient in the algorithm outlined in Section \ref {sect:normalforms}
for computing Birkhoff-Poincar\'e normal forms for arbitrary $G$.

In the statement of the proposition, $\Omega_c$ is the canonical symplectic form on $T^*G$, which
is identified by left-trivialisation with $G\times \lieg^*$. The symplectic form $\Omega_Y$ is
the form on $Y := G \times \mathfrak{g}_\mu^* \times N_s$ that appears in the Hamiltonian Slice Theorem
(Theorem \ref{marle}). This symplectic form is stated more explicitly in \eqref{E:OmegaY},
using the identification of $N_s$ with $T_\mu \mathcal{O}_\mu$ that appears earlier in the same section.

\medskip

%\begin{proposition}
\textbf{Restatement of Proposition \ref{tubecond} (Tube Condition)}:
$\Phi^* \Omega_c = \Omega_Y$ if and only if

\begin{align*}
\Phi(g, \nu, \ad\nolimits_\eta^* \mu) 
= \left(
g F(\nu, \eta)^{-1}, 
\Ad\nolimits_{F(\nu, \eta)^{-1}}^*
\left(\mu + \nu\right)
\right)
\end{align*}
for some $F:\mathfrak{g}_\mu^* \times \mathfrak{g}_\mu^\perp \to G$ such that 
%$F(0,0) = e$ and
%

\begin{align*}
&
\left\langle 
\mu + \nu, 
\left[ 
F(\nu, \eta)^{-1} \left(DF(\nu, \eta) \cdot \left(\dot \nu_1, \zeta_1\right)\right),
F(\nu, \eta)^{-1} \left(DF(\nu, \eta) \cdot \left(\dot \nu_2, \zeta_2\right)\right)
\right]
\right\rangle \\
&+
\left\langle
\dot \nu_2, F(\nu, \eta)^{-1} \left(DF(\nu, \eta) \cdot \left(\dot \nu_1, \zeta_1 \right) \right)
\right\rangle
- \left\langle
\dot \nu_1, F(\nu, \eta)^{-1} \left(DF(\nu, \eta) \cdot \left( \dot \nu_2, \zeta_2 \right) \right)
\right\rangle \notag \\
&= \left\langle \mu, \left[\zeta_1, \zeta_2\right] \right \rangle. \notag
\end{align*}

\begin{proof}
The most general formula for a $G$-equivariant $\Phi$ is
\begin{align*}
\Phi(g, \nu, \ad\nolimits_\eta^* \mu) 
= \left(g F_1(\nu, \eta), F_2(\nu, \eta)\right).
\end{align*}
We consider the condition $\Phi^*\Omega_c = \Omega_Y$.
%, in the case
%$\dot \nu_1 = \dot \nu_2 = 0$ and $\zeta_1 = \zeta_2 = 0$.
Since
\begin{align*}
D\Phi(g, \nu, \ad\nolimits_\eta^* \mu)
\cdot (\xi,0,0,) 
= \left(\Ad\nolimits_{F_1^{-1}(\nu, \eta)}^*\xi , 0\right),
\end{align*}
it follows that, for all $\xi_1, \xi_2$,
\begin{align*}
\left(\Phi^*\Omega_c\right)(e, \nu, \ad\nolimits_\eta^* \mu)
\left(
\left(\xi_1, 0,0 \right)
\left(\xi_2, 0,0\right)
\right)
&=
\Omega_Y(e, \nu, \ad\nolimits_\eta^* \mu)
\left(
\left(\xi_1, 0,0\right)
\left(\xi_2, 0,0\right)
\right) \\
% \quad \textrm{for all } \xi_1, \xi_2 \\
\Leftrightarrow 
\left\langle F_2\left(\nu, \ad\nolimits_\eta^* \mu\right),
\left[\Ad\nolimits_{F_1^{-1}(\nu, \eta)}\xi_1, \Ad\nolimits_{F_1^{-1}(\nu, \eta)}\xi_2 \right] 
\right\rangle
&=
\left\langle \mu + \nu, [\xi_1,\xi_2]\right\rangle \\
%\quad \textrm{for all } \xi_1, \xi_2 \\
\Leftrightarrow 
\left\langle  \Ad\nolimits_{F_1^{-1}(\nu, \eta)}^* F_2\left(\nu, \ad\nolimits_\eta^* \mu\right),
\left[\xi_1, \xi_2 \right] 
\right\rangle
&=
\left\langle \mu + \nu, [\xi_1,\xi_2]\right\rangle.
%\quad \textrm{for all } \xi_1, \xi_2 \\
\end{align*}
Hence this condition is true for all $\xi_1, \xi_2$ if and only if
\begin{align*}
%\Leftrightarrow 
F_2\left(\nu, \ad\nolimits_\eta^* \mu\right)
&= \Ad\nolimits_{F_1(\nu, \eta)}^*\left(\mu + \nu\right).
\end{align*}
Let $F = F_1^{-1}$. We will use the notation 
$F(\nu, \eta) \nu :=  \Ad\nolimits_{F(\nu, \eta)^{-1}}^* \nu$, so 
\begin{align*}
\Phi(g, \nu, \ad\nolimits_\eta^* \mu) 
= \left(
g F(\nu, \eta)^{-1}, F(\nu, \eta) \left(\mu + \nu\right)
\right),
\end{align*}
with first derivative:
\begin{align*}
D\Phi&(g, \nu, \ad\nolimits_\eta^* \mu)
\cdot (\xi, \dot \nu, \ad\nolimits_\zeta^* \mu) \\
&= \left(\Ad\nolimits_{F(\nu, \eta)}\xi 
-  \left( DF(\nu, \eta) \cdot (\dot \nu, \zeta)\right) F(\nu, \eta)^{-1}, 
\left(DF(\nu, \eta) \cdot (\dot \nu, \zeta)\right)(\mu + \nu) + F(\nu, \eta) \,  \dot \nu\right).
\end{align*}
(using left-trivialisation in the first component).
%but not the hat-map on the first component,
%and using the hat-map on the second component).
Hence,

\begin{align*}
&\left(\Phi^*\Omega_c\right)(e, \nu, \ad\nolimits_\eta^* \mu)
\left(
\left(\xi_1, \dot \nu_1, \ad\nolimits_{\zeta_1}^* \mu\right)
\left(\xi_2, \dot \nu_2, \ad\nolimits_{\zeta_2}^* \mu\right)
\right) \\
&= \Omega_c
\left(
g F(\nu, \eta)^{-1}, 
F(\nu, \eta)
\left(\mu + \nu\right)
\right) \\
& \quad  (
\left(\Ad\nolimits_{F(\nu, \eta)}\xi_1 
-  \left( DF(\nu, \eta) \cdot (\dot \nu_1, \zeta_1)\right) F(\nu, \eta)^{-1}, 
\left(DF(\nu, \eta) \cdot (\dot \nu_1, \zeta_1)\right)(\mu + \nu) + F(\nu, \eta) \,  \dot \nu_1\right), \\
& \qquad 
\left(\Ad\nolimits_{F(\nu, \eta)}\xi_2 
-  \left( DF(\nu, \eta) \cdot (\dot \nu_2, \zeta_2)\right) F(\nu, \eta)^{-1}, 
\left(DF(\nu, \eta) \cdot (\dot \nu_2, \zeta_2)\right)(\mu + \nu) + F(\nu, \eta) \,  \dot \nu_2\right)
)
\\
&= 
\langle F\left(\nu, \eta\right)(\mu + \nu), \\
&\left[ 
\Ad\nolimits_{F(\nu, \eta)}\xi_1 
-  \left( DF(\nu, \eta) \cdot (\dot \nu_1, \zeta_1)\right) F(\nu, \eta)^{-1},
\Ad\nolimits_{F(\nu, \eta)}\xi_2 
-  \left( DF(\nu, \eta) \cdot (\dot \nu_2, \zeta_2)\right) F(\nu, \eta)^{-1}
\right] 
\rangle \\
&\quad 
+ \left\langle 
\left(DF(\nu, \eta) \cdot (\dot \nu_2, \zeta_2)\right)(\mu + \nu) + F(\nu, \eta) \,  \dot \nu_2, 
\Ad\nolimits_{F(\nu, \eta)}\xi_1 
-  \left( DF(\nu, \eta) \cdot (\dot \nu_1, \zeta_1)\right) F(\nu, \eta)^{-1}\right\rangle \\
&\quad - \left\langle 
\left(DF(\nu, \eta) \cdot (\dot \nu_1, \zeta_1)\right)(\mu + \nu) + F(\nu, \eta) \,  \dot \nu_1, 
\Ad\nolimits_{F(\nu, \eta)}\xi_2 
-  \left( DF(\nu, \eta) \cdot (\dot \nu_2, \zeta_2)\right) F(\nu, \eta)^{-1}
\right\rangle.
\end{align*}
To verify the condition $\Phi^*\Omega_c = \Omega_Y$, we must consider all pairs of 
 tangent vectors $(\xi_i, \dot \nu_i, \zeta_i)$. 
By linearity, it suffices to consider only tangent vectors where two of these
three components are zero. Thus there are 9 types of tangent vector pairs to consider,
which reduce to 6 types by skew-symmetry.
The \underline{$\xi - \xi$} case has already been considered above,
with the conclusion that the pull-back condition is automatically satisfied for arbitrary $F$.
This same conclusion will now be shown to apply in the  
the \underline{$\xi - \dot\nu$}  and \underline{$\xi - \zeta$} cases.
Finally, we will combine the remaining 3 cases into one \underline{$(\nu, \zeta) - (\nu, \zeta)$} case,
which will lead to the Tube Condition in Proposition \ref{tubecond}.

\medskip
\noindent \underline{Case $\xi-\dot \nu$}: 
\begin{align*}
&\left(\Phi^*\Omega_c\right)(e, \nu, \ad\nolimits_\eta^* \mu)
\left(
\left(\xi_1, 0,0\right)
\left(0, \dot \nu_2,0\right)
\right) \\
&=\left\langle F\left(\nu, \eta\right)(\mu + \nu),
\left[ 
\Ad\nolimits_{F(\nu, \eta)}\xi_1,
-  \left( DF(\nu, \eta) \cdot (\dot \nu_2, 0)\right) F(\nu, \eta)^{-1}
\right] 
\right\rangle \\
&\quad 
+ \left\langle 
\left(DF(\nu, \eta) \cdot (\dot \nu_2, 0)\right)(\mu + \nu) + F(\nu, \eta) \,  \dot \nu_2, 
\Ad\nolimits_{F(\nu, \eta)}\xi_1 
\right\rangle\\
&=\left\langle F\left(\nu, \eta\right)(\mu + \nu),
\left[ 
\left( DF(\nu, \eta) \cdot (\dot \nu_2, 0)\right) F(\nu, \eta)^{-1},
\Ad\nolimits_{F(\nu, \eta)}\xi_1
\right] 
\right\rangle \\
&\quad 
+ \left\langle 
\left(DF(\nu, \eta) \cdot (\dot \nu_2, 0)\right)(\mu + \nu), 
\Ad\nolimits_{F(\nu, \eta)}\xi_1 
\right\rangle\\
&\quad
+ \left\langle 
F(\nu, \eta) \,  \dot \nu_2, 
\Ad\nolimits_{F(\nu, \eta)}\xi_1 
\right\rangle
\end{align*}
Using explicit notation for the coadjoint action gives:
\begin{align*}
&\left\langle \Ad\nolimits_{F(\nu, \eta)^{-1}}^* (\mu + \nu),
\ad\nolimits_{\left( DF(\nu, \eta) \cdot (\dot \nu_2, 0)\right) F(\nu, \eta)^{-1}}
\left(\Ad\nolimits_{F(\nu, \eta)}\xi_1\right)
\right\rangle \\
&\quad 
+ \left\langle 
- \Ad\nolimits_{F(\nu, \eta)^{-1}}^*
\ad\nolimits^*_{F(\nu, \eta)^{-1}\left(DF(\nu, \eta) \cdot (\dot \nu_2, 0)\right)}(\mu + \nu), 
\Ad\nolimits_{F(\nu, \eta)}\xi_1 
\right\rangle\\
&\quad + \left\langle 
\Ad\nolimits_{F(\nu, \eta)^{-1}}^* \,  \dot \nu_2, 
\Ad\nolimits_{F(\nu, \eta)}\xi_1 
\right\rangle \\
&=\left\langle \Ad\nolimits_{F(\nu, \eta)^{-1}}^* (\mu + \nu),
\ad\nolimits_{\left( DF(\nu, \eta) \cdot (\dot \nu_2, 0)\right) F(\nu, \eta)^{-1}}
\left(\Ad\nolimits_{F(\nu, \eta)}\xi_1\right)
\right\rangle \\
&\quad 
- \left\langle 
\ad\nolimits^*_{\left(DF(\nu, \eta) \cdot (\dot \nu_2, 0)\right)F(\nu, \eta)^{-1}}
\left(\Ad\nolimits_{F(\nu, \eta)^{-1}}^*(\mu + \nu) \right), 
\Ad\nolimits_{F(\nu, \eta)}\xi_1 
\right\rangle\\
&\quad + \left\langle 
\dot \nu_2, 
\xi_1 
\right\rangle \\
&=\left\langle 
\dot \nu_2, 
\xi_1 
\right\rangle \\
&=\Omega_Y(e, \nu, \ad\nolimits_\eta^* \mu)
\left(
\left(\xi_1, 0,0\right)
\left(0, \dot \nu_2,0\right)
\right), 
\qquad \textrm{for all } \xi_1,\dot\nu_2 
\end{align*}
automatically, for all functions $F$.

\medskip
\noindent \underline{Case $\xi-\zeta$}: 
\begin{align*}
&\left(\Phi^*\Omega_c\right)(e, \nu, \ad\nolimits_\eta^* \mu)
\left(
\left(\xi_1, 0,0\right),
\left(0, 0,\ad\nolimits_{\zeta_2}^* \mu\right)
\right) \\
&=\left\langle F\left(\nu, \eta\right)(\mu + \nu),
\left[ 
\Ad\nolimits_{F(\nu, \eta)}\xi_1,
-  \left( DF(\nu, \eta) \cdot (0, \zeta_2)\right) F(\nu, \eta)^{-1}
\right] 
\right\rangle \\
&\quad 
+ \left\langle 
\left(DF(\nu, \eta) \cdot (0, \zeta_2)\right)(\mu + \nu), 
\Ad\nolimits_{F(\nu, \eta)}\xi_1 
\right\rangle\\
&=0 \\
&=\Omega_Y(e, \nu, \ad\nolimits_\eta^* \mu)
\left(
\left(\xi_1, 0,0\right)
\left(0, 0, \zeta_2\right)
\right), 
\qquad \textrm{for all } \xi_1,\zeta_2 
\end{align*}
automatically, for all functions $F$.

\medskip
\noindent
\underline{Case $(\dot \nu,\zeta) - (\dot \nu,\zeta)$} (three cases combined)
\begin{align*}
&\left(\Phi^*\Omega_c\right)(e, \nu, \ad\nolimits_\eta^* \mu)
\left(
\left(0, \dot \nu_1, \ad\nolimits_{\zeta_1}^* \mu\right)
\left(0, \dot \nu_2, \ad\nolimits_{\zeta_2}^* \mu\right)
\right) \\
&= 
\left\langle F\left(\nu, \eta\right)(\mu + \nu),
\left[ 
\left( DF(\nu, \eta) \cdot (\dot \nu_1, \zeta_1)\right) F(\nu, \eta)^{-1},
 \left( DF(\nu, \eta) \cdot (\dot \nu_2, \zeta_2)\right) F(\nu, \eta)^{-1}
\right] 
\right\rangle \\
&\quad 
-\left\langle 
\left(DF(\nu, \eta) \cdot (\dot \nu_2, \zeta_2)\right)(\mu + \nu), 
\left( DF(\nu, \eta) \cdot (\dot \nu_1, \zeta_1)\right) F(\nu, \eta)^{-1}\right\rangle \\
&\quad 
+\left\langle 
\left(DF(\nu, \eta) \cdot (\dot \nu_1, \zeta_1)\right)(\mu + \nu), 
\left( DF(\nu, \eta) \cdot (\dot \nu_2, \zeta_2)\right) F(\nu, \eta)^{-1}
\right\rangle \\
&\quad 
- \left\langle 
F(\nu, \eta) \,  \dot \nu_2, 
\left( DF(\nu, \eta) \cdot (\dot \nu_1, \zeta_1)\right) F(\nu, \eta)^{-1}
\right\rangle \\
&\quad 
+ \left\langle 
F(\nu, \eta) \,  \dot \nu_1, 
\left( DF(\nu, \eta) \cdot (\dot \nu_2, \zeta_2)\right) F(\nu, \eta)^{-1}
\right\rangle.
\end{align*}
Using explicit notation for the coadjoint action gives:
\begin{align*}
&\left(\Phi^*\Omega_c\right)(e, \nu, \ad\nolimits_\eta^* \mu)
\left(
\left(0, \dot \nu_1, \ad\nolimits_{\zeta_1}^* \mu\right)
\left(0, \dot \nu_2, \ad\nolimits_{\zeta_2}^* \mu\right)
\right) \\
&= 
\left\langle \Ad\nolimits_{F(\nu, \eta)^{-1}}^*(\mu + \nu),
\left[
\left( DF(\nu, \eta) \cdot (\dot \nu_1, \zeta_1)\right) F(\nu, \eta)^{-1},
 \left( DF(\nu, \eta) \cdot (\dot \nu_2, \zeta_2)\right) F(\nu, \eta)^{-1}
 \right]
\right\rangle \\
&\quad 
-\left\langle 
\ad\nolimits^*_{\left(DF(\nu, \eta) \cdot (\dot \nu_2, \zeta_2)\right)F(\nu, \eta)^{-1}}
\left(\Ad\nolimits_{F(\nu, \eta)^{-1}}^*(\mu + \nu) \right), 
\left( DF(\nu, \eta) \cdot (\dot \nu_1, \zeta_1)\right) F(\nu, \eta)^{-1}\right\rangle \\
&\quad 
+\left\langle 
\ad\nolimits^*_{\left(DF(\nu, \eta) \cdot (\dot \nu_1, \zeta_1)\right)F(\nu, \eta)^{-1}}
\left(\Ad\nolimits_{F(\nu, \eta)^{-1}}^*(\mu + \nu) \right), 
\left( DF(\nu, \eta) \cdot (\dot \nu_2, \zeta_2)\right) F(\nu, \eta)^{-1}
\right\rangle \\
&\quad 
- \left\langle 
\dot \nu_2, 
F(\nu, \eta)^{-1} \left(  DF(\nu, \eta) \cdot (\dot \nu_1, \zeta_1)\right)
\right\rangle \\
&\quad 
+ \left\langle 
\dot \nu_1, 
 F(\nu, \eta)^{-1} \left( DF(\nu, \eta) \cdot (\dot \nu_2, \zeta_2)\right)
\right\rangle\\
&= 
-\left\langle \Ad\nolimits_{F(\nu, \eta)^{-1}}^*(\mu + \nu),
\left[
\left( DF(\nu, \eta) \cdot (\dot \nu_1, \zeta_1)\right) F(\nu, \eta)^{-1},
 \left( DF(\nu, \eta) \cdot (\dot \nu_2, \zeta_2)\right) F(\nu, \eta)^{-1}
 \right]
\right\rangle \\
&\quad 
- \left\langle 
\dot \nu_2, 
F(\nu, \eta)^{-1} \left(  DF(\nu, \eta) \cdot (\dot \nu_1, \zeta_1)\right)
\right\rangle 
+ \left\langle 
\dot \nu_1, 
 F(\nu, \eta)^{-1} \left( DF(\nu, \eta) \cdot (\dot \nu_2, \zeta_2)\right)
\right\rangle\\
&= 
-\left\langle \mu + \nu,
\left[
F(\nu, \eta)^{-1} \left( DF(\nu, \eta) \cdot (\dot \nu_1, \zeta_1)\right),
F(\nu, \eta)^{-1} \left( DF(\nu, \eta) \cdot (\dot \nu_2, \zeta_2)\right)
\right]
\right\rangle \\
&\quad 
- \left\langle 
\dot \nu_2, 
F(\nu, \eta)^{-1} \left(  DF(\nu, \eta) \cdot (\dot \nu_1, \zeta_1)\right)
\right\rangle 
+ \left\langle 
\dot \nu_1, 
 F(\nu, \eta)^{-1} \left( DF(\nu, \eta) \cdot (\dot \nu_2, \zeta_2)\right)
\right\rangle.
\end{align*}
We need this to equal
$\Omega_Y(e, \nu, \ad\nolimits_\eta^* \mu)
\left(
\left(0, \dot \nu_1, \ad\nolimits_{\zeta_1}^* \mu\right)
\left(0, \dot \nu_2, \ad\nolimits_{\zeta_2}^* \mu\right)
\right)
= - \left\langle \mu, \left[\zeta_1, \zeta_2\right]\right\rangle$,
for all $\nu, \dot \nu_1, \dot \nu_2 \in \mathfrak{g}_\mu^*$,
for all $\eta, \zeta_1, \zeta_2 \in \mathfrak{g}^\perp$.
This proves the Tube Condition.
\end{proof}

\medskip

\begin{remark} We note the three special cases that were combined in the 
``$(\dot \nu, \zeta) - (\dot \nu, \zeta)$'' case above:

\noindent
\underline{Case $\dot \nu - \dot \nu$}:
When $\dot \zeta_1 = \dot \zeta_2 = 0$, the condition in the proposition is equivalent to:
\begin{align*}
&\left\langle \mu + \nu,
\left[
F(\nu, \eta)^{-1} \left( DF(\nu, \eta) \cdot (\dot \nu_1, 0)\right),
F(\nu, \eta)^{-1} \left( DF(\nu, \eta) \cdot (\dot \nu_2, 0)\right)
\right]
\right\rangle \\
&\quad + 
\left\langle 
\dot \nu_2, 
F(\nu, \eta)^{-1} \left( DF(\nu, \eta) \cdot (\dot \nu_1, 0)\right) \right\rangle
- \left\langle 
\dot \nu_1, 
F(\nu, \eta)^{-1} \left( DF(\nu, \eta) \cdot (\dot \nu_2, 0)\right)
\right\rangle = 0.
\end{align*}
Note that a sufficient condition is that
$\left( DF(\nu, \eta) \cdot (\dot \nu_1, 0)\right)$ is a multiple of
$\left( DF(\nu, \eta) \cdot (\dot \nu_2, 0)\right)$
and 
$F(\nu, \eta)^{-1} \left( DF(\nu, \eta) \cdot (\dot \nu_1, 0)\right) \in \mathfrak{g}_\mu^\perp$
for all $\dot \nu_1,\dot \nu_2 \in \mathfrak{g}_\mu^*$.

\medskip
\noindent
\underline{Case $\dot \nu - \zeta$}:
When $\dot \zeta_1 = \dot \nu_2 = 0$, the condition in the proposition is equivalent to:
\begin{align*}
& - \left\langle (\mu + \nu),
\left[ 
F(\nu, \eta)^{-1} \left(  DF(\nu, \eta) \cdot (\dot \nu_1, 0)\right),
F(\nu, \eta)^{-1} \left(  DF(\nu, \eta) \cdot (0, \zeta_2)\right)
\right] 
\right\rangle \\
&\quad + \left\langle 
\dot \nu_1, 
F(\nu, \eta)^{-1} \left( DF(\nu, \eta) \cdot (0, \zeta_2)\right)
\right\rangle = 0
\end{align*}
all $\dot \nu_1, \zeta_2$,
since 
$\Omega_Y
(e, \nu, \ad\nolimits_\eta^* \mu)
\left(\left(0, \dot \nu_1, 0\right), \left(0, 0, \zeta_2\right)\right) = 0$.

\medskip
\noindent
\underline{Case $\zeta - \zeta$}:
When $\dot \nu_1 = \dot \nu_2 = 0$, the condition in the proposition is equivalent to:
\begin{align}\label{E:slicecond}
&\left\langle 
\mu + \nu, 
\left[ 
F(\nu, \eta)^{-1} \left(DF(\nu, \eta) \cdot \left(0, \zeta_1\right)\right),
F(\nu, \eta)^{-1} \left(DF(\nu, \eta) \cdot \left(0, \zeta_2\right)\right)
\right]
\right\rangle \\
&= \left\langle \mu, \left[\zeta_1, \zeta_2\right]\right\rangle. \notag
\end{align}

\end{remark}

The last case above may be compared with the following:

\medskip

\textbf{Restatement of Lemma \ref{preserveKKS}}:
Let $\varphi:T_\mu\mathcal{O}_\mu \to \mathcal{O}_\mu$ 
be of the form $\varphi(-\ad\nolimits^*_\eta \mu) = f(\eta)\mu$ for some
$f:\mathfrak{g}_\mu^\perp \to G$.
Then $\varphi$
preserves the $-$KKS symplectic form
if and only if
\begin{align}\label{E:preserveKKS2}
\left\langle\mu, \left[\zeta_1, \zeta_2\right]\right\rangle
&= \left\langle f(\eta)\mu, 
\left[\left(Df(\eta)\cdot \zeta_1\right) f(\eta)^{-1},
\left(Df(\eta)\cdot \zeta_2\right) f(\eta)^{-1} 
\right]
\right\rangle \\
&= \left\langle \mu, 
\left[f(\eta)^{-1} \left(Df(\eta)\cdot \zeta_1\right),
f(\eta)^{-1} \left(Df(\eta)\cdot \zeta_2\right)
\right]
\right\rangle \notag
\end{align}
for all $\eta,\zeta_1,\zeta_2 \in \mathfrak{g}_\mu^\perp$.

\begin{proof} 
$D\varphi(-\ad\nolimits^*_\eta \mu) \cdot \left(-\ad\nolimits^*_\zeta \mu\right)
= \left(Df(\eta) \cdot \zeta\right) \mu$ (using the ``hat'' map for $\mu$), 
which corresponds to 
\[
-\ad\nolimits^*_{\left(Df(\eta) \cdot \zeta_1\right)f(\eta)^{-1}} 
\left(\Ad\nolimits^*_{f(\eta)^{-1}} \mu\right),
\]
so
\begin{align*}
&\left(\varphi^*\Omega^-_{KKS}\right) \left(-\ad\nolimits_\eta^*\mu\right)
\left(-\ad\nolimits_{\zeta_1}^*\mu, -\ad\nolimits_{\zeta_2}^*\mu\right) \\
&\quad = \Omega^-_{KKS} (f(\eta) \mu)
\left(\left(Df(\eta) \cdot \zeta_1\right) \mu, \left(Df(\eta) \cdot \zeta_2\right) \mu\right) \\
&\quad = \Omega^-_{KKS} (\Ad\nolimits^*_{f(\eta)^{-1}} \mu)
\left(
\ad\nolimits^*_{\left(Df(\eta) \cdot \zeta_1\right)f(\eta)^{-1}} 
\left(\Ad\nolimits^*_{f(\eta)^{-1}} \mu\right), 
\ad\nolimits^*_{\left(Df(\eta) \cdot \zeta_2\right)f(\eta)^{-1}}
\left(\Ad\nolimits^*_{f(\eta)^{-1}} \mu\right)
\right) \\
&\quad = 
\left\langle 
\Ad\nolimits^*_{f(\eta)^{-1}} \mu,
\left[
\left(Df(\eta) \cdot \zeta_1\right)f(\eta)^{-1} ,
\left(Df(\eta) \cdot \zeta_2\right)f(\eta)^{-1} 
\right]
\right\rangle \\
&\quad = 
\left\langle 
\mu,
\left[
\Ad\nolimits_{f(\eta)^{-1}} \left(\left(Df(\eta) \cdot \zeta_1\right)f(\eta)^{-1}\right) ,
\Ad\nolimits_{f(\eta)^{-1}} \left(\left(Df(\eta) \cdot \zeta_2\right)f(\eta)^{-1}\right) 
\right]
\right\rangle \\
&\quad = 
\left\langle 
\mu,
\left[
f(\eta)^{-1} \left(Df(\eta) \cdot \zeta_1\right) ,
f(\eta)^{-1} \left(Df(\eta) \cdot \zeta_2\right)
\right]
\right\rangle
\end{align*}
\end{proof}

The similarity of conditions (\ref{E:slicecond}) and (\ref{E:preserveKKS2})
led to the discovery of an explicit construction of a symplectic tube for $G=SO(3)$, see 
Section \ref{ssect:sliceSO3}.

\medskip

Finally, we prove Lemma \ref{lem:ansatz} in Section \ref{ssect:sliceSO3}.
This lemma concerns an Ansatz that is motivated by our consideration of the $SO(3)$ case. 
However, the lemma is valid for all Lie groups.

\textbf{Restatement of Lemma \ref{lem:ansatz}}:
Suppose $\displaystyle F(\nu, \eta) = \exp\left(h(\nu,\eta) \,  \frac{\eta}{\|\eta\|}\right)$,
for some $h: \lieg_{\mu}^* \times \lieg_\mu^\perp \to \R$. 
Then the Tube Condition in Proposition \ref{tubecond} is automatically satisfied
(regardless of the definition of $h$) for all 
$\left(\dot \nu_1, \zeta_1\right), \left(\dot \nu_2, \zeta_2\right)$ such that
$\zeta_1$ and $\zeta_2$ are parallel to $\eta$.

\begin{proof}
For arbitrary $f:\R \to M(n,\R)$, if $f'(0)$ is a multiple of $f(0)$ then they commute, so
\[
\left.\frac{d}{dt}\right|_{t=0} \exp \left(f(t)\right) 
= \left.\frac{d}{dt}\right|_{t=0} \exp \left(f(0) + t f'(0)\right) 
= \exp \left(f(0)\right) f'(0) 
= f'(0) \exp \left(f(0)\right).
\]
Let $f(t) = h\left((\nu, \eta) + t(\dot \nu, \zeta)\right) \frac{\eta}{\|\eta\|}$.
If $\zeta$ is parallel to $\eta$ then 
$\displaystyle \frac{\eta + t \zeta}{\|\eta + t \zeta\|} = \frac{\eta}{\|\eta\|}$, so
\begin{align*}
DF(\nu, \eta) \cdot (\dot \nu, \zeta)
&= \left.\frac{d}{dt}\right|_{t=0}
F\left((\nu, \eta) + t (\dot \nu, \zeta)\right)\\
&= \left.\frac{d}{dt}\right|_{t=0}
\exp\left(h\left((\nu,\eta) + t (\dot \nu, \zeta)\right) \,  \frac{\eta}{\|\eta\|}\right) \\
&= \left.\frac{d}{dt}\right|_{t=0}
\exp \left(f(t)\right) \\
&= f'(0) \exp \left(f(0)\right) \\
&= \left(Dh(\nu, \eta) \cdot (\dot \nu, \zeta)\right) \frac{\eta}{\| \eta \|} F(\nu, \eta) \\
&=  F(\nu, \eta) \left(Dh(\nu, \eta) \cdot (\dot \nu, \zeta)\right) \frac{\eta}{\| \eta \|}.
\end{align*}
Thus if $\zeta_1$ and $\zeta_2$ are parallel to $\eta$, 
\begin{align*}
\left[ 
F(\nu, \eta)^{-1} \left(DF(\nu, \eta) \cdot \left(\dot \nu_1, \zeta_1\right)\right),
F(\nu, \eta)^{-1} \left(DF(\nu, \eta) \cdot \left(\dot \nu_2, \zeta_2\right)\right)
\right] = 0
\end{align*}
and 
\begin{align*}
\left\langle \dot \nu_i, F(\nu, \eta)^{-1} \left(DF(\nu, \eta) \cdot \left(\dot \nu_j, \zeta_j\right)\right)
\right\rangle
= 0,
\end{align*}
for all $i,j = 1,2$.
Therefore the Tube Condition in Proposition \ref{tubecond} holds.
\end{proof}

\bibliographystyle{spmpsci}      
\bibliography{tanya}    

 \end{document}